\newtheorem{theorem}{Theorem}
\newtheorem{proposition}[theorem]{Proposition}
\newtheorem{lemma}[theorem]{Lemma}
\newtheorem{remark}[theorem]{Remark}
\newtheorem{example}[theorem]{Example}
\newtheorem*{theorem*}{Theorem}
\def\XXint#1#2#3{{\setbox0=\hbox{$#1{#2#3}{\int}$ }
\vcenter{\hbox{$#2#3$ }}\kern-.6\wd0}}
\newcommand{\sign}{\operatorname{sign}}
\newcommand{\essinf}{\operatorname{essinf}}
\newcommand{\R}{\mathbb{R}}
\newcommand{\Z}{\mathbb{Z}}
\newcommand{\N}{\mathbb{N}}
\def\ae{\color{black}}
\begin{document}
\title{External forces in the continuum limit of discrete systems with non-convex interaction potentials: Compactness for a $\Gamma$-development}
\author{Marcello Carioni\footnote{Marcello Carioni, University of
W\"urzburg, Institute of Mathematics, Emil-Fischer-Str.\ 40, 97074 W\"urzburg, Germany.  Present address: University of Graz, Institute for Mathematics and
Scientific Computing, 
Heinrichstra{\ss}e 36,
8010 Graz, Austria, Email: {\tt marcello.carioni@uni-graz.at} },~ Julian Fischer\footnote{Julian Fischer, Max Planck Institute for Mathematics in the Sciences, Inselstr.\ 22, 04103 Leipzig, Germany.  Present address: Julian Fischer
Institute of Science and Technology Austria, 
Am Campus 1,
3400 Klosterneuburg, Austria, Email: {\tt julian.fischer@ist.ac.at}  \ae}~
and Anja Schl\"omerkemper\footnote{Anja Schl\"omerkemper, University of
W\"urzburg, Institute of Mathematics, Emil-Fischer-Str.\ 40, 97074 W\"urzburg, Germany, Email: {\tt anja.schloemerkemper@mathematik.uni-wuerzburg.de}}}
\date{\today}
\maketitle

\begin{abstract}
\noindent This paper is concerned with equilibrium configurations of one-dimensional particle systems with non-convex nearest-neighbour and next-to-nearest-neighbour
interactions and its passage to the continuum. 
The goal is to derive compactness results for a $\Gamma$-development of the energy with the novelty that external forces are allowed.  In particular, the forces may depend on Lagrangian or Eulerian coordinates and thus may model dead as well as live loads. \\
Our result is based on a new technique for deriving compactness results
which are required for calculating the first-order $\Gamma$-limit in the presence of external forces: instead of comparing a configuration of $n$ atoms to a global minimizer of the
$\Gamma$-limit, we compare the configuration to a minimizer in
some subclass of functions which in some sense are ``close to'' the
configuration. The paper is complemented with the study of the minimizers of the $\Gamma$--limit. 
\end{abstract}

\section{Introduction}
The derivation of continuum theories from atomistic models in the context of elasticity theory has been a
very active area of research in the previous decades. Prominent mathematical methods, as well as the present paper, are phrased in the context of $\Gamma$-convergence; see, e.g., \cite{Braides} for an introduction. One important problem for which partial results are available is the
derivation of continuum models for brittle fracture as a limit of atomistic
models. In the limit we expect a variational problem that yields information on the cracks and models the elastic behaviour of the material outside the crack. 

In this article we focus on one-dimensional systems, which have served as toy-models by themselves in engineering and mathematical physics, see, e.g.\ \cite{CharlotteTrusk2002,JansenKoenigSchmidtTheil} and references therein. In particular, one-dimensional systems have the advantage of a monotonic ordering of the particles that makes the mathematical modeling simpler, cf.\ \cite{BraidesGelli2017,BraidesSolci2014}. In nanomaterials research, one-dimensional particle systems have recently gained interest, for instance as carbon atom wires \cite{ReviewCAW, Nairetal,Zhangetal} or one-dimensional systems of silicon \cite{Sylicon}.
One-dimensional surface nanostructures on semiconductor surfaces such as chains of gold atoms on certain substrates, see, e.g., \cite{Wagneretal}, raise the interest of mechanical properties of such chains depending on the forces exerted by the substrate since the electronic properties strongly depend on the lattice constants. Similarly, chains of fullerenes in carbon nanotubes experience external forces which influence the chain's mechanical properties, see, e.g., \cite{Warneretal}. Our analysis yields a first insight into the effective behaviour due to particle interactions and external forces.
\\

Following \cite{BraidesCicalese,
ScardiaSchloemerkemperZanini}, we focus on one-dimensional systems of particles (e.g.~atoms or molecules) which interact with their nearest and next-to-nearest neighbours via some non-convex potential like the classical Lennard-Jones potential. We additionally allow for  external forces, including dead as well as live loads. That is, the external forces may depend on the position within the reference configuration of the system as well as on the deformation, i.e., on Lagrangian and Eulerian coordinates.  

We stress that the class of interaction potentials  between the particles which  we consider in our analysis includes many physically interesting potentials that are strongly repulsive at short distances and mildly attractive at long distances, cf.\ Section~\ref{assumptions} for more details. The classical Lennard-Jones potential (even without a cut-off) is just one example. Other expamples include interaction potentials typically used in molecular dynamics like the Gay-Berne potentials for fixed geometries of the molecules in liquid crystals \cite{Zannoni2001} or the Girifalco potentials derived for the interaction of fullerene molecules \cite{Girifalco}. \\

Our aim is to derive continuum energy functionals from models in the discrete setting as the number of particles tends to infinity. We do so in the context of $\Gamma$-convergence techniques. Related mathematical results known for higher-dimensional fracture problems and the derivation of surface energies in the passage from discrete to continuous systems can be found in \cite{BraidesGelli2017, FriedrichSchmidt2014, FriedrichSchmidt2015,KitavtsevLuckhausRueland}. For literature on the variational modeling of fracture in the continuum setting we refer to the introduction of \cite{Friedrich2017} and the references therein.

In the setting of one-dimensional chains of atoms as considered here, there are essentially two approaches that lead to a limiting functional that contains information on the elastic behaviour outside cracks as well as information on the cracks. (1) One approach is starting from an energy density of the discrete system which is given by the sum of all interaction potentials between the atoms of the chain. The $\Gamma$-limit of this energy yields an integral functional that corresponds to a bulk energy contribution only; information on the size of the fracture is lost since the integrand of that bulk term ($d=1$-dimensional) is the convex hull of an effective potential which has its minimum on an unbounded set.  One therefore considers the so-called $\Gamma$-limit of first order which recovers some information on surface energies ($d-1=0$-dimensional), i.e. on the energies related to the crack formation. A $\Gamma$-development then yields the desired limiting model, cf.\  \cite{AnzellottiBaldo,BraidesTruskinovsky, SSZ12}.  We note that this first approach, which we follow here, allows for large deformations, cf.\ also \cite{BraidesCicalese,ScardiaSchloemerkemperZanini}. For a discussion of the importance of allowing for large deformations see \cite{Friedrich2017}. (2) The second approach starts from suitably rescaled energy densities which essentially scale surface and bulk contributions in the same way. The $\Gamma$-limit of such rescaled energies leads to a contribution of a linear elastic energy and a part depending on the cracks. The latter approach involves a harmonic approximation around the minimum of the interaction potentials and thus can be considered as an approach in small  displacements, cf.\  \cite{BraidesLewOrtiz, BraidesTruskinovsky, LauerbachSchaffnerSchlomerkemper,SSZ12,SchaeffnerSchloemerkemper}. Moreover, there are studies of one-dimensional systems with different scalings for the convex and the concave part of the internal potentials, cf.\  \cite{BraidesDalmasoGarroni,BraidesGelli2002, BraidesGelli}. For other mathematical approaches to discrete to continuum analysis for fracture mechanics and surface energies we refer to \cite{CharlotteTrusk2002, CharlotteTrusk2008, Hudson,IosifescuLichtMichaille,JansenKoenigSchmidtTheil}.   \\

 In this paper  we first calculate the $\Gamma$-limit $H$ of the energy $H_n$  of the discrete system  as the number of atoms $n$ tends to infinity (Section~\ref{sett}). As one can see from the obtained formula, any information on the number of cracks (i.e., the jump points of macroscopic deformation $u$) is lost in the $\Gamma$-limit: indeed the positive
singular part of the derivative of $u$ has no influence on $H$.
Therefore, in order to gain further insight in the limiting behaviour of the considered chain of atoms, we study a higher order description of $H_n$ 
by employing the development by $\Gamma$-convergence. More precisely, one considers the sequence of functionals 
\begin{gather}
H_{1,n}^\ell(u_n):=\frac{H_n(u_n)-\inf_u H(u)}{\lambda_n}
\label{DefinitionH1n}
\end{gather}
with the goal of determining a $\Gamma$-limit for $H_{1,n}^\ell$, denoted by $H^1$ and  called \emph{first order $\Gamma$-limit} or $\Gamma$-limit of first order of $H_n$.
Incorporation of forces poses additional challenges, as the value of the minimum of $H$ is not known explicitly and therefore the derivation of properties of $H_{1,n}^\ell$ requires a careful analysis.

 Here  we restrict our attention to the characterization of the minimizers of the zeroth-order $\Gamma$-limit and the compactness results relevant for the identification of the first order $\Gamma$-limit. The full study of the $\Gamma$-development of the discrete energies is postponed to a future work.
In order to derive compactness results, one is tempted to employ the method of even-odd
interpolation developed in \cite{BraidesCicalese} and used in the previous papers. However this fails without proper modifications:  the even-odd interpolation strongly modifies the deformation $u$ of the chain of atoms (while preserving the gradient) and therefore it cannot provide enough control of the external force that is depending explicitly on the variable $u$.\\
The novel method that we develop for the proof of the compactness results involves the construction of suitable competitors for $H$ that we denote by $v_{1,n}$ and $v_{2,n}$. The goal is to choose $v_{1,n}$ and $v_{2,n}$ such that the difference $\frac{1}{\lambda_n}(H_{n}(u_n)-\frac{1}{2}(H(v_{1,n})+H(v_{2,n})))$ provides control of $(u'_n - \gamma)_+$ while at the same time being controlled by $H_{1,n}^\ell(u_n)$. Here, $\gamma$ is the minimizer of the effective interaction potential and thus the equilibrium condition of $u'$ in the absence of external forces, cf.\ Assumption~$\mathscr{A}$ in Section~\ref{assumptions}. The definition of  $v_{1,n}$ and $v_{2,n}$ is based on a careful step-by-step construction that truncates the slopes of the standard interpolation and introduces jumps in suitably chosen points of the domain (see Section~\ref{proofcomp}).\\


 The outline of the paper is as follows.  In Section~\ref{MinimizersOfLimit} we study properties for minimizers of the zeroth-order $\Gamma$-limit in order to  gain  a preliminary understanding of the first order $\Gamma$-limit. More precisely, we characterize the points of the domain (depending of the external force) where a non-elastic behaviour can occour. Then  we show further regularity results of the minimizers and we prove that there cannot be regions in the domain  with a complete compression.  This part is inspired by 
 \cite{BraidesDalmasoGarroni}, where the special case of a dead load $\Phi(x,u)=-f(x)u(x)$ is considered.  \\
In Section~\ref{seccomp} we state the main results concerning the compactness estimates for the first order $\Gamma$-limit: in Theorem~\ref{Compactness1}~(a) we prove that sequences of configurations that keep $H_{1,n}^\ell$ uniformly bounded have only a finite number of bonds such that $(u_n'-\gamma)_+ \geq \varepsilon$. Notice that this result is different to the usual compactness estimates obtained in previous related works as \cite{ BraidesCicalese,ScardiaSchloemerkemperZanini}. Indeed, the energy does not provide a control of the distance of $u_n'$ from $\gamma$, but only of its positive part. This is an effect due to the presence of the external forces. 
In Theorem~\ref{Compactness1} (b) we provide a more precise information on the magnitude of $(u'_n-\gamma)_+$. 
Finally, in Theorem~\ref{Compactness1} (c) we prove that the ratio of compression of the material remains uniformly bounded along sequences that keep the rescaled energies $H_{1,n}^\ell$ bounded.  
This ensures that in the derivation of the first order $\Gamma$-limit the singular behaviour of the potentials at zero is immaterial.\\
 The set of assumptions on the interaction potentials that are used throughout the paper (denoted by Assumption~$\mathscr{A}$) will be written explicitly only in Section~\ref{assumptions}. The reader can go through the statements of the main results assuming that the interaction potentials are two standard Lennard-Jones potentials (see Remark~\ref{rem}).\\
In Sections~\ref{proofproperties} and \ref{proofcomp} we provide the proofs of the results stated in Section~\ref{MinimizersOfLimit} (for the properties of minimizers of the limit functional) and Section~\ref{seccomp} (for the main results about compactness), which concludes the paper. \ae

%
%

%
%
%

\section{Setting and main results}

\subsection{Setting}\label{sett}
A configuration of the chain of $n+1$ atoms
is described by a map
$u_n: \lambda_n\Z \cap [0,1] \rightarrow \mathbb{R}$, where we abbreviate $\lambda_n:=\frac{1}{n}$, $n\in\N$. 
As it is customary, we call the set of all possible configurations $\mathcal{A}_n(0,1)$ and we identify it with the set of all piecewise affine interpolations on $[0,1]$:
\begin{equation*}
\mathcal{A}_n(0,1) = \{u:[0,1] \rightarrow \R: u\in C([0,1]), u(x) \mbox{ is affine in }(i\lambda_n,(i+1)\lambda_n) \ \forall i=0,\ldots,n-1\}.
\end{equation*} 
We also denote by $u_n^i := u(i\lambda_n)$ the deformed
configuration of the $i$-th atom in the chain. 

The
atoms are assumed to interact with their nearest neighbours and next-to-nearest
neighbours; the interactions are described using two non-convex potentials $J_1$ and $J_2$
for nearest-neighbour and next-to-nearest neighbour interactions, respectively.
We furthermore assume that the external forces are described by a potential
$\Phi:[0,1]\times \mathbb{R}\rightarrow \mathbb{R}$, where the first variable
describes the position of the atom in the reference configuration and the
second variable describes the position of the atom in the deformed
configuration.

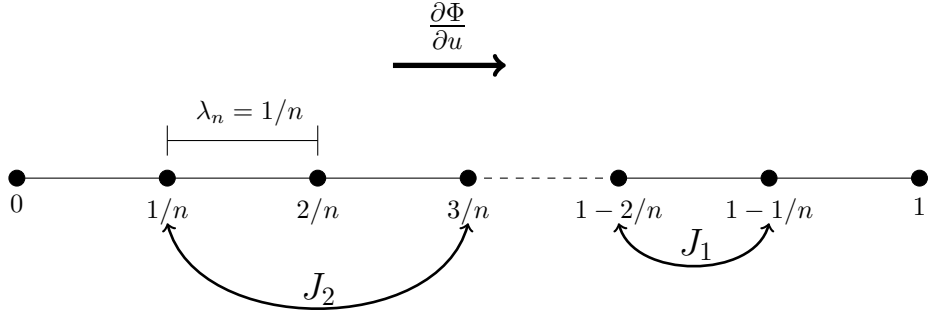
\begin{figure}
\begin{center}
\begin{tikzpicture}
\draw[->,line width=2pt] (3,1.5) to (4.5,1.5) node[above=15pt, left=10pt]{\Large $\frac{\partial\Phi}{\partial u}$};
\draw  (-2,0)  node[below=3pt]{0}  -- (4,0);
\draw  [dashed] (4,0)   -- (6,0);
\draw  (6,0)   -- (10,0);
\draw  (0,0.5) -- (2,0.5);
\draw  (0,0.3) -- (0,0.7);
\draw  (2,0.3)  -- (2,0.7) node[above=5pt, left=2pt]{$\lambda_n = 1/n$};
\filldraw (-2,0) circle (3pt); 
\filldraw (0,0) node[below=4pt]{$1/n$} circle (3pt);
\filldraw (2,0) node[below=4pt]{$2/n$} circle (3pt);
\filldraw (4,0) node[below=4pt]{$3/n$} circle (3pt);
\filldraw (6,0) node[below=4pt]{$1-2/n$} circle (3pt);
\filldraw (8,0) node[below=4pt]{$1 - 1/n$} circle (3pt);
\filldraw (10,0) node[below=4pt]{$1$} circle (3pt);
\draw [<->,line width=1pt] (0,-0.6) to [out=-75,in=-105] (4,-0.6) node[below=25pt, left=46pt]{\Large $J_2$};
\draw [<->,line width=1pt] (6,-0.6) to [out=-75,in=-105] (8,-0.6) node[below=8pt, left=17pt]{\Large $J_1$};

\end{tikzpicture}
\end{center}

\caption{A chain of $n+1$ atoms with interactions given by the potentials $J_1$ and $J_2$ and the external potential $\Phi$.
\label{chain}
}
\end{figure}

\begin{remark}\label{rem} 
Throughout the paper we assume the potential $\Phi$ to be in $C^2([0,1] \times \R)$. Moreover the assumptions on $J_j$ for $j=1,2$ are quite classical for the usual convex-concave interaction potentials; they are satisfied, e.g., by the standard Lennard-Jones potentials. In order to avoid technicalities in this first part of the paper we summarize the set of assumptions satisfied by $\Phi, J_1$ and $J_2$ in Assumption~$\mathscr{A}$. The precise assumptions will be written explicitly and commented on in Section~\ref{assumptions}. For the moment the reader can think of them as a standard set of assumptions satisfied by the Lennard-Jones potential.   The constant $\gamma>0$ denotes the minimizer of the effective potential $J_0$ defined in \eqref{J0def}, and $(0, \gamma^c+c)$ for some $c>0$ is the regime of strict convexity of $J_1$ and $J_2$, see [H0] in Assumption~$\mathscr{A}$ for details. \ae
\end{remark}

\begin{example}
(i) If the external force $f$ depends only on the Lagrangian coordinate, i.e., describes a dead load, the potential simply is
\begin{equation} \label{examplePhi}
\Phi(x,u(x))=-f(x)u(x)\,.
\end{equation}

(ii) If the external force $f$ also depends on the Eulerian coordinate, which models what is sometimes called live loads and is, e.g., of interest in the case of a chain of atoms that is placed on a substrate \cite{Wagneretal}, the potential reads
\begin{equation*}
\Phi(x,u(x))=-\int_0^{u(x)} f(x,w) \,dw\,.
\end{equation*}
\end{example}

We define the energy associated with a configuration $u_n$ by
\begin{align}\label{acca}
H_n(u_n):= &
\sum_{i=0}^{n-1}\lambda_n
J_1\left(\frac{u_n^{i+1}-u_n^{i}}{\lambda_n}\right)
+\sum_{i=0}^{n-2}\lambda_n
J_2\left(\frac{u_n^{i+2}-u_n^{i}}{2\lambda_n}\right)
+\sum_{i=0}^{n}\lambda_n \Phi(i\lambda_n,u_n^i)\,.
\end{align}

Moreover, we add Dirichlet boundary conditions in the following way:
\begin{itemize}
  \item[\textrm{[B0]}] \quad $u_n^0=0$ and $u_n^n=\ell$,
\end{itemize}
and we prescribe the slope of the discrete configuration at the boundary, i.e. we
require that
\begin{itemize}
  \item[\textrm{[B1]}] \quad $u_n^1=\lambda_n \theta_0$ and
  $u_n^{n-1}=\ell-\lambda_n \theta_1$
\end{itemize}
for some fixed $\theta_0,\theta_1>0$. 
The reader can compare assumptions [$B0$] and [$B1$] with \cite{BraidesCicalese} where only [$B0$] is assumed and
\cite{ScardiaSchloemerkemperZanini} where additionally [$B1$] is imposed. 
The Dirichlet conditions [$B0$] and [$B1$] correspond to the situation of a \emph{hard loading device}. As remarked in \cite{CharlotteTrusk2002,CharlotteTrusk2008} it is natural to impose four Dirichlet boundary conditions in the case of next-to-nearest neighborhood interactions as they ensure the equilibrium of the discrete system.

We set 
\begin{equation*}
\mathcal{A}^{\ell}_n(0,1) := \{u \in \mathcal{A}_n(0,1) : [B0] \mbox{ and } [B1] \mbox{ hold}\}
\end{equation*}

and 

\begin{equation}
H_n^\ell(u_n)=\left\{
\begin{array}{ll}
H_n(u_n) & \mbox{ if } u_n \in \mathcal{A}^{\ell}_n(0,1)\, , \\
+\infty & \mbox{ otherwise}\, .
\end{array}
\right.
\end{equation}

Endowing $\mathcal{A}^\ell_n(0,1)$ with the $L^1$ topology we define the (zeroth-order) $\Gamma$-limit of $H^\ell_n$ for $n\rightarrow +\infty$ as
\begin{gather*}
H:= \underset{n\rightarrow +\infty}{\Gamma-\lim}\, H^\ell_n
\end{gather*}
in the $L^1$ topology (see \cite{Braides} for an introduction to $\Gamma$-convergence).

In order to identify the $\Gamma$-limit $H$ we define the effective potential $J_0$ as the inf-convolution
of $J_1$ and $J_2$, i.e.
\begin{gather}\label{J0def} 
J_0(z):=\inf\left\{J_2(z) + \frac{1}{2}\left(J_1(z_1)+J_1(z_2)\right) : \frac{1}{2}(z_1 + z_2) = z   \right\}.
\end{gather}
Its minimizer is denoted by $\gamma$, see Assumption~$\mathscr{A}$. 
Further,  $J_0^{\ast\ast}$ is the lower convex envelope of $J_0$, cf.\ \eqref{J0starstar}. 
We also denote the set of all functions
$u\in BV((a-1,b+1))$ that are equal to 0 on $(a-1,a)$ and equal to $\ell>0$ on $(b,b+1)$ by $BV^\ell([a,b])$. Similarly we define the space $SBV^\ell([a,b])$. 
Moreover, given $u\in BV([a,b])$, the notation $D^s u$ refers to the singular part of the
distributional derivative of $u$, while $u'$ refers to the absolutely continuous part.
\begin{proposition}
\label{IdentificationGammaLimit}
Suppose that Assumption $\mathscr{A}$ is satisfied.
Then the $\Gamma$-limit of $(H^\ell_n)_n$ with respect to the $L^1(0,1)$ topology is given
by
\begin{gather}\label{gamma}
H(u):=
\begin{cases}
\displaystyle\int_{0}^1 J_0^{\ast\ast}(u')+\Phi(x,u)~dx
\quad \text{ if }  u\in BV^\ell({[0,1]})\text{ and
}D^s u\geq 0
\\
+\infty \quad \text{ else}\, .
\end{cases}
\end{gather}
\end{proposition}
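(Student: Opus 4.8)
The strategy is to peel off the external force as a continuously convergent perturbation and to reduce to the purely elastic $\Gamma$-limit, which (with the single boundary condition [B0]) is due to \cite{BraidesCicalese} and (with [B1] in addition) to \cite{ScardiaSchloemerkemperZanini}. Write
\begin{equation*}
H_n^\ell(u_n)=E_n^\ell(u_n)+F_n(u_n),\qquad F_n(u_n):=\sum_{i=0}^{n}\lambda_n\,\Phi(i\lambda_n,u_n^i),
\end{equation*}
where $E_n^\ell$ collects the $J_1$- and $J_2$-sums together with the constraints [B0], [B1]. The assumptions on $J_1$ force $u_n^{i+1}>u_n^i$ whenever the energy is finite, so every finite-energy configuration is monotone and, by [B0], satisfies $0\le u_n\le\ell$; in particular $\|u_n\|_{L^\infty}\le\ell$ along any equibounded-energy sequence. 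On such sequences $F_n(u_n)$ is a Riemann sum of $x\mapsto\Phi(x,u_n(x))$ up to an error controlled by the modulus of continuity of $\Phi$ on $[0,1]\times[0,\ell]$ (split the bonds into those with small and those with large increment), so $u_n\to u$ in $L^1$ forces $F_n(u_n)\to\int_0^1\Phi(x,u)\,dx=:F(u)$ by dominated convergence. Hence it suffices to show that $E_n^\ell$ $\Gamma$-converges in $L^1(0,1)$ to
\begin{equation*}
E(u):=\int_0^1 J_0^{**}(u')\,dx\ \text{ if }u\in BV^\ell([0,1]),\ D^su\ge0,\qquad E(u):=+\infty\ \text{ otherwise,}
\end{equation*}
after which $H=E+F$ is the claimed functional.

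\emph{Lower bound.} Let $u_n\to u$ in $L^1$ with $\liminf_n E_n^\ell(u_n)<+\infty$; passing to a subsequence we may assume it is a limit and $u_n\in\mathcal{A}_n^\ell(0,1)$. Distributing each nearest-neighbour term equally onto the (one or two) next-to-nearest-neighbour terms containing it and using the definition \eqref{J0def} of $J_0$ as an inf-convolution (note $\tfrac12\bigl(\tfrac{u_n^{i+1}-u_n^i}{\lambda_n}+\tfrac{u_n^{i+2}-u_n^{i+1}}{\lambda_n}\bigr)=\tfrac{u_n^{i+2}-u_n^i}{2\lambda_n}$) one obtains
\begin{equation*}
E_n^\ell(u_n)\ \ge\ \sum_{i=0}^{n-2}\lambda_n\,J_0\!\left(\frac{u_n^{i+2}-u_n^i}{2\lambda_n}\right)+\frac{\lambda_n}{2}\bigl(J_1(\theta_0)+J_1(\theta_1)\bigr),
\end{equation*}
where the last term comes from the two boundary bonds fixed by [B1] and is $O(\lambda_n)$. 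Splitting the sum by the parity of $i$ and introducing the piecewise affine interpolations $v_n,\widetilde v_n$ of $(u_n^{2k})_k$ and $(u_n^{2k+1})_k$ on the grid $2\lambda_n\Z$, monotonicity of $u_n$ gives $\|u_n-v_n\|_{L^1}+\|u_n-\widetilde v_n\|_{L^1}\le C\lambda_n\ell\to0$, so $v_n,\widetilde v_n\to u$ in $L^1$, and the sum above equals $\tfrac12\int_0^1 J_0(v_n')\,dx+\tfrac12\int_0^1 J_0(\widetilde v_n')\,dx$ up to $O(\lambda_n)$. Since $J_0\ge J_0^{**}$ and $J_0^{**}$ is convex, equal to $+\infty$ on $(-\infty,0]$ and bounded above on $(0,+\infty)$ (a consequence of Assumption $\mathscr{A}$; for Lennard-Jones-type potentials $J_0$ is bounded), the classical relaxation theorem for convex integrals on $BV$ gives $\liminf_n\int_0^1 J_0^{**}(v_n')\,dx\ge\int_0^1 J_0^{**}(u')\,dx$ if $u\in BV$ with $D^su\ge0$ and $+\infty$ otherwise — the recession function of $J_0^{**}$ being $0$ in the positive and $+\infty$ in the negative direction, which is precisely why only the constraint $D^su\ge0$ and no surface term appear. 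The same holds for $\widetilde v_n$; averaging, and using that the extensions of $v_n,\widetilde v_n$ are pinned to $0$ and $\ell$ outside $[0,1]$ (so $u\in BV^\ell([0,1])$ with nonnegative jumps at the endpoints), yields $\liminf_n E_n^\ell(u_n)\ge E(u)$.

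\emph{Upper bound.} It suffices to produce recovery sequences for $u$ with $E(u)<+\infty$. By a density argument one first reduces to $u$ piecewise affine with finitely many (necessarily positive) jumps and with $u'$ a step function taking finitely many values in $(0,+\infty)$: the absolutely continuous part is approximated in $L^1$ by step functions with $\int_0^1 J_0^{**}(u_k')\,dx\to\int_0^1 J_0^{**}(u')\,dx$, using that $J_0^{**}$ is continuous with sublinear growth and $J_0^{**}(u')\in L^1$, while the positive singular part (jump and Cantor) is approximated weakly-$*$ by finite sums of nonnegative Diracs, which perturbs $u$ little in $L^1$ and does not change the energy because $J_0^{**}$ carries no surface term. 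For such $u$ the recovery sequence is built bond by bond: on an interval where $u'\equiv s$ with $s$ in the convex regime ($J_0^{**}(s)=J_0(s)$) one uses the even–odd construction of \cite{BraidesCicalese}, letting the nearest-neighbour slopes alternate between the two values realising the infimum in \eqref{J0def} for $z=s$ so that the next-to-nearest-neighbour slope stays $s$ and the energy density tends to $J_0(s)$; on an interval where $s>\gamma$, i.e.\ $J_0^{**}(s)=J_0(\gamma)$, one lets $u_n$ run with slope $\gamma$ (again in the alternating sense) and absorbs the excess increment $(s-\gamma)|I|$ into a few bonds of slope $O(1/\lambda_n)$, each costing $\lambda_nJ_1(O(1/\lambda_n))+\lambda_nJ_2(O(1/\lambda_n))=O(\lambda_n)\to0$; the prescribed jumps of $u$ are realised the same way. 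Finally one inserts $O(\lambda_n)$-wide boundary layers near $0$ and $1$ to meet [B0] and [B1] at cost $O(\lambda_n)$, matching the phase of the even–odd oscillation across the finitely many interfaces. Then $E_n^\ell(u_n)\to E(u)$, and with $F_n(u_n)\to F(u)$ we get $H_n^\ell(u_n)\to H(u)$.

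\emph{Main obstacle.} The technical heart is the upper bound: reconciling the even–odd (zigzag) recovery sequence — unavoidable because the infimum defining $J_0$ is in general attained at unequal nearest-neighbour slopes — with the four Dirichlet conditions [B0]–[B1] and the density reduction, while verifying that jumps and the regime $u'>\gamma$ genuinely cost nothing in the limit. The lower bound, by contrast, is essentially forced by the inf-convolution inequality and standard $BV$ relaxation once the forcing term has been isolated as a continuous perturbation.
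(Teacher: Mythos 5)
Your proposal takes essentially the same route as the paper: the paper's entire proof consists of isolating the force term and showing that it converges (as a Riemann sum, using monotonicity of finite-energy configurations and the resulting uniform bound $0\le u_n\le\ell$) along every $L^1$-convergent sequence, and then citing \cite{ScardiaSchloemerkemperZanini} for the $\Phi\equiv0$ case — which is exactly your first paragraph. Your additional sketch of the elastic lower and upper bounds reproduces the cited arguments and is sound, apart from the harmless misstatement that $J_0^{\ast\ast}$ is bounded above on all of $(0,+\infty)$ (it blows up as $z\to0^+$ by [H5]; what you actually need is only that its recession function in the positive direction vanishes, which holds because $J_0^{\ast\ast}$ is constant on $[\gamma,+\infty)$).
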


The derivation of the zeroth-order $\Gamma$-limit for functionals of the type of \eqref{acca} is classical and it has been established in \cite{ScardiaSchloemerkemperZanini} in case
$\Phi\equiv 0$ (see also \cite[Theorem 4.2]{BraidesCicalese}
and \cite[Theorem 3.2]{BraidesGelli} for earlier related results). We refer to Section~\ref{proofproperties} for the proof. 

As one can see from \eqref{gamma}, any information on the number of cracks (i.e., the jump points of $u$)
 is lost in the zeroth-order $\Gamma$-limit: indeed the positive
singular part of the derivative of $u$ has no influence on $H$.
Therefore, in order to gain further insight in the limiting behaviour of the considered chain of atoms, we provide a higher order description of $H_n$ 
by employing the development by $\Gamma$-convergence introduced in \cite{AnzellottiBaldo} (see also \cite{BraidesCicalese, BraidesTruskinovsky}). More precisely, one considers the sequence of functionals 
\begin{gather*}
H_{1,n}^\ell(u_n):=\frac{H_n^\ell(u_n)-\inf_u H(u)}{\lambda_n}
\end{gather*}
with the goal of determining a $\Gamma$-limit for $H_{1,n}^\ell$ (denoted by $H^1$), called \emph{first order $\Gamma$-limit} of $H_n$.

The following properties of the minimizers of the limit functional will be useful to study the first order $\Gamma$-limit.

\subsection{Properties of minimizers of the limit functional}\label{MinimizersOfLimit}

The existence of minimizers of the limit functional is obtained by a classical application of the direct method in the calculus of variations (see, e.g. \cite{Dacorogna}). 
\begin{proposition}
\label{MinimizerExistence}
Suppose that Assumption $\mathscr{A}$ is satisfied.
Then the functional $H:L^1(0,1) \rightarrow (-\infty, + \infty]$ defined in \eqref{gamma}
has a minimizer $u \in BV^\ell([0,1])$.
\end{proposition}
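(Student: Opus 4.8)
The plan is to apply the direct method in the calculus of variations to the functional $H$ defined in \eqref{gamma}, working in the space $BV^\ell([0,1])$ endowed with the $L^1(0,1)$ topology. First I would establish that $H$ is bounded from below on its effective domain: since $J_0^{\ast\ast}$ is convex and (under Assumption $\mathscr{A}$) grows at least linearly at $+\infty$ while blowing up near $0$, and since $\Phi \in C^2([0,1]\times\R)$, a competitor with bounded energy must have $\int_0^1 J_0^{\ast\ast}(u')\,dx$ controlled; combined with the boundary condition $u(0)=0$, $u(1)=\ell$ fixing the mean of $u'$, this yields an $L^\infty$ bound on $u$ (via $u(x) = \int_0^x u' + D^s u$ with $D^s u \geq 0$ and $\int_0^1 u' \leq \ell$), hence a lower bound on $\int_0^1 \Phi(x,u)\,dx$. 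So $\inf H > -\infty$ and the domain is nonempty (e.g. the affine map $u(x)=\ell x$).

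Next I would take a minimizing sequence $(u_k) \subset BV^\ell([0,1])$ with $H(u_k) \to \inf H$. From the energy bound and the at-least-linear growth of $J_0^{\ast\ast}$ I extract a uniform bound on $\int_0^1 |u_k'|\,dx$; together with $D^s u_k \geq 0$ and the fixed total increment $\ell$, this gives a uniform bound on the total variation $|Du_k|([0,1])$, and the boundary values give a uniform $L^\infty$ (hence $L^1$) bound. By $BV$ compactness in one dimension, a subsequence converges in $L^1(0,1)$ (and weakly-$\ast$ in $BV$) to some $u \in BV([0,1])$. One must check $u \in BV^\ell([0,1])$: the trace/boundary conditions $u(0)=0$, $u(1)=\ell$ pass to the limit because in $1$D $L^1$-convergence plus $BV$-boundedness controls the one-sided limits at the endpoints (or one works with the extended functions on $(-1,2)$ as in the definition of $BV^\ell$, where the constraint is that $u$ equals $0$ on $(-1,0)$ and $\ell$ on $(1,2)$, which is stable under $L^1$ convergence). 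Likewise $D^s u \geq 0$ is preserved: the constraint $Du \geq \gamma^c \mathcal{L}^1$ fails in general, but the sign condition on the singular part follows from weak-$\ast$ convergence of measures together with the lower bound coming from the convexity structure — more carefully, $D^s u_k \geq 0$ and absolute continuity considerations show the limiting singular part is nonnegative.

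Finally I would verify lower semicontinuity: $u \mapsto \int_0^1 J_0^{\ast\ast}(u')\,dx$ together with the constraint $D^s u \geq 0$ is $L^1$-lower semicontinuous on $BV$ by the standard theory of convex functionals of measures (Reshetnyak / Goffman–Serrin), since $J_0^{\ast\ast}$ is convex and its recession function governs the singular part, and the constraint set $\{D^s u \geq 0\}$ is closed; meanwhile $u \mapsto \int_0^1 \Phi(x,u)\,dx$ is continuous under $L^1$ convergence (indeed under a.e. convergence of a subsequence plus the uniform $L^\infty$ bound, by dominated convergence and continuity of $\Phi$). Hence $H(u) \leq \liminf_k H(u_k) = \inf H$, so $u$ is a minimizer. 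The main obstacle I anticipate is not the abstract scheme but the bookkeeping at the two ends of the interval: making sure the Dirichlet data $u(0)=0$, $u(1)=\ell$ and the one-sided constraint $D^s u \geq 0$ (including possible boundary jumps) are genuinely stable under $L^1$ convergence given only $BV$ bounds — this is where the precise formulation of $BV^\ell([0,1])$ via extension to $(-1,2)$ does the work, so I would lean on that formulation throughout.
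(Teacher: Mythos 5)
Your overall scheme is the right one and is essentially the paper's: the direct method in $BV^\ell([0,1])$ with the $L^1$ topology. There are, however, two points worth flagging. First, a genuine error in a supporting claim: you assert twice that $J_0^{\ast\ast}$ ``grows at least linearly at $+\infty$'' and use this to extract the $L^1$ bound on $u_k'$ from the energy bound. This contradicts Assumption~$\mathscr{A}$: by [H3] the potentials are bounded at infinity, and by \eqref{J0starstar} $J_0^{\ast\ast}$ is \emph{constant} on $[\gamma,+\infty)$, so the bound $\int_0^1 J_0^{\ast\ast}(u_k')\,dx\leq C$ gives no control whatsoever on $\int_0^1 (u_k')_+\,dx$. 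The step as stated fails. The correct (and simpler) mechanism, which the paper uses, is monotonicity: [H5] forces $u_k'>0$ a.e.\ on any finite-energy configuration (otherwise $J_0^{\ast\ast}(u_k')=+\infty$ on a set of positive measure), which together with $D^s u_k\geq 0$ and the boundary data makes $u_k$ increasing from $0$ to $\ell$; hence $0\leq u_k\leq\ell$, $|Du_k|([0,1])=\ell$, and both the $BV$ bound and the lower bound on the $\Phi$-term are immediate. Your own formula $u(x)=\int_0^x u'+D^su$ with ``$\int_0^1 u'\leq\ell$'' does not by itself give an $L^\infty$ bound unless you first invoke $u'\geq 0$, so the positivity of $u'$ should be made the explicit first step.

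Second, a legitimate difference of route on lower semicontinuity: you propose verifying $L^1$-lower semicontinuity of $u\mapsto\int_0^1 J_0^{\ast\ast}(u')\,dx$ on $\{D^su\geq 0\}$ by hand via Goffman--Serrin/Reshetnyak (the recession function of $J_0^{\ast\ast}$ is $0$ in the direction $+1$ and $+\infty$ in the direction $-1$, so the constrained functional is indeed the relaxation and hence lsc), plus continuity of the $\Phi$-term by dominated convergence. This works, but the paper sidesteps it entirely: since $H$ is identified in Proposition~\ref{IdentificationGammaLimit} as a $\Gamma$-limit in $L^1(0,1)$, it is automatically $L^1$-lower semicontinuous, which also disposes of your worries about the stability of the constraints $D^su\geq 0$ and of the boundary data under $L^1$ convergence (the limit lies in the domain of $H$ simply because $H(u)\leq\liminf_k H(u_k)<+\infty$). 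Your approach buys independence from the $\Gamma$-convergence result at the price of the measure-theoretic relaxation machinery; with the growth claim corrected as above, either route is sound.
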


The study of minimizers of the zeroth-order $\Gamma$-limit is fundamental for the identification of the first order $\Gamma$-limit.
Indeed, from \eqref{DefinitionH1n} it follows that the first-order $\Gamma$-limit is infinite for all $u$ in the domain of definition of $H$ with $H(u)-\inf_v H(v)>0$.
For this reason we devote the rest of this section to the study of the properties of minimizers of $H$.


\medskip

Depending on the external potential $\Phi(x,u)$ we can identify a region in $[0,1]$ in which there is elastic behaviour and no
cracks. More precisely, we will show that
both $(u'-\gamma)_+$ and $D^s u$
necessarily vanish outside some set determined by the external potential $\Phi$. 

\begin{proposition}
\label{JumpCondition}
Suppose that Assumption $\mathscr{A}$ is satisfied.
Let $u \in BV^\ell([0,1])$ be a minimizer of $H$. Let $F:[0,1] \rightarrow \mathbb{R}$ be defined by
\begin{align*}
F(x):=\int_x^1 - \frac{\partial\Phi}{\partial u}(y,u(y)) ~dy\,.
\end{align*}
Let $M$ be the set of
(global) maximum points of $F$. Then the supports of $(u'-\gamma)_+$ and of $D^s u$ are subsets of $M$.
\end{proposition}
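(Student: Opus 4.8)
The plan is to argue by contradiction, using two structural facts. First, by Assumption~$\mathscr{A}$ the convex envelope $J_0^{\ast\ast}$ attains its minimum on the entire unbounded interval $[\gamma,+\infty)$ (this is precisely why the zeroth-order limit loses the crack information, cf.\ the discussion after \eqref{gamma}); consequently, decreasing the absolutely continuous slope of a competitor from any value $\ge\gamma$ down to $\gamma$, or enlarging its positive singular part, does not change the elastic term $\int_0^1 J_0^{\ast\ast}(u')\,dx$. Second, by definition one has $F'(x)=\frac{\partial\Phi}{\partial u}(x,u(x))$, so inserting a positive jump of size $\beta$ at a point $x^\ast$ changes the potential term by $\beta\int_{x^\ast}^1\frac{\partial\Phi}{\partial u}(x,u(x))\,dx+O(\beta^2)=-\beta F(x^\ast)+O(\beta^2)$, which is minimized exactly when $x^\ast\in M$. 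So the heuristic is: the only effect of moving ``excess slope'' $(u'-\gamma)_+$ or positive singular mass onto $M$ — which is possible because the total increment of $u$ is fixed and the elastic cost is insensitive to it — is to lower $\int\Phi$, and strictly so unless this mass already sits over $M$.

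I would first treat the case $\supp(u'-\gamma)_+\not\subseteq M$. Then there is an open set $U$ disjoint from $M$ on which $(u'-\gamma)_+$ does not vanish a.e.; since $F$ is continuous and $F<\max F$ on $U$, I can pick $\delta>0$ so that the set $E:=\{x\in U:\, u'(x)>\gamma,\ F(x)\le\max F-\delta\}$ has positive measure, hence $m:=\int_E(u'-\gamma)\,dx>0$. For small $\varepsilon>0$ I would then define a competitor $\tilde u_\varepsilon\in BV^\ell([0,1])$ by transporting an $\varepsilon$-fraction of this excess to a fixed point $x^\ast\in M$: set $\tilde u_\varepsilon':=u'-\varepsilon(u'-\gamma)\mathbf{1}_E$ (so that $\gamma\le\tilde u_\varepsilon'\le u'$ on $E$ and $\tilde u_\varepsilon'=u'$ elsewhere) and $D^s\tilde u_\varepsilon:=D^su+\varepsilon m\,\delta_{x^\ast}$, choosing the constant of integration so that the added jump restores the value $\ell$ at $x=1$. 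One checks $D^s\tilde u_\varepsilon\ge D^su\ge0$ and $H(\tilde u_\varepsilon)<+\infty$, so $\tilde u_\varepsilon$ is admissible. For the case $\supp D^su\not\subseteq M$ the competitor is analogous: transport an $\varepsilon$-fraction of $D^su$ restricted to a Borel subset $I$ of such a set $U$ with $D^su(I)>0$ and $F\le\max F-\delta$ on $I$, leaving $u'$ unchanged.

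Next I would compare energies. Since $\tilde u_\varepsilon'$ and $u'$ differ only on $E$, where both lie in $[\gamma,+\infty)$, the elastic term is unchanged. Writing $\tilde u_\varepsilon-u=-R_\varepsilon+\varepsilon m\,\mathbf{1}_{\{x>x^\ast\}}$ with $R_\varepsilon(x):=\varepsilon\int_{E\cap[0,x)}(u'-\gamma)\,dt\in[0,\varepsilon m]$, and using $|\tilde u_\varepsilon-u|\le\varepsilon m$ together with $\Phi\in C^2$ and the boundedness of $u$, a Taylor expansion of $\Phi$ in its second argument and an integration by parts (recall $F(1)=0$ and $R_\varepsilon(0)=0$) give
\begin{align*}
H(\tilde u_\varepsilon)-H(u)&=\int_0^1 F'(x)\big(\tilde u_\varepsilon(x)-u(x)\big)\,dx+O(\varepsilon^2)\\
&=\varepsilon\int_E\big(F(x)-\max F\big)(u'-\gamma)\,dx+O(\varepsilon^2)\le-\varepsilon\,\delta\,m+O(\varepsilon^2),
\end{align*}
which is strictly negative for $\varepsilon$ small and contradicts the minimality of $u$. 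The same estimates apply verbatim in the $D^su$ case; the only difference is that the primitive $R_\varepsilon$ is then a $BV$ rather than a Lipschitz function, which is harmless since $F$ is continuous and vanishes at $1$. Hence both supports are contained in $M$.

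I expect the delicate point to be the construction of a competitor that is admissible \emph{and} a small perturbation at the same time: one may only lower $u'$ down to — not below — $\gamma$, so as to remain on the flat part of $J_0^{\ast\ast}$ and avoid paying elastic energy; one may only remove, never add, singular mass, redepositing it as a single new positive jump so that $D^su\ge0$ is preserved; and the deposit point $x^\ast\in M$ could in principle be $0$ or $1$, which must be checked to be compatible with $\tilde u_\varepsilon\in BV^\ell([0,1])$. Finally, localizing $E$ (resp.\ $I$) to the strict sublevel set $\{F\le\max F-\delta\}$, rather than merely to $M^c$, is what turns the first-order gain into a strict, quantitative decrease of the energy rather than just a non-increase.
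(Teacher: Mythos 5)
Your proposal is correct and follows essentially the same route as the paper: the paper likewise truncates the slope at $\gamma$, transports the excess mass $(u'-\gamma)_+\,dx + D^s u$ to a point $m\in M$ (handling the absolutely continuous and singular parts at once via a single measure $\lambda$), uses the flatness of $J_0^{\ast\ast}$ on $[\gamma,+\infty)$ to keep the elastic term unchanged, and shows that the first variation of the potential term equals $\int_0^1 (F(y)-F(m))\,d\lambda(y)<0$. The only differences are presentational: the paper parametrizes the perturbation by convex combinations $(1-\mu)u+\mu\widetilde u$ rather than by an $\varepsilon$-fraction of the transported mass, and obtains strictness directly from $F<F(m)$ off the closed set $M$ rather than by localizing to a strict sublevel set of $F$.
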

 The following examples show that indeed a minimizer can have jumps both in the domain $(0,1)$ and at the boundary, as well as a derivative that is strictly bigger than $\gamma$. The first example deals with the case of absence of external forces, while the second one is an instance of external forces depending only on Lagrangian coordinates, i.e., it deals with dead loads.
\begin{example}
Let $\Phi(x,w) = 0$ for every $x,w$. Then $M=[0,1]$ and it is clear from \eqref{J0starstar} that any function $u\in BV^\ell([0,1])$ with slope bigger than $\gamma$ and non-negative $D^su$ is a minimizer for $H$,    a case that was already handled in \cite{ScardiaSchloemerkemperZanini}.
\end{example}

\begin{example}\label{exedead}
 Let  $\Phi(x,u(x)) = - f(x)u(x)$. Thus $-\partial_u \Phi(x,u(x)) = f(x)$. Firstly, note that  there can only be a crack at $x_0\in (0,1)$ if $F$ in Proposition~\ref{JumpCondition} has a global maximum in $x_0$, i.e., if $f(x_0)=0$. Secondly, in the case $f(x) < 0$ for all $x\in (0,1)$ and $\ell>\gamma$, we have $M=\{1\}$. Assume that $u \in BV^{\ell}([0,1])$ is a minimizer of $H$. If we modify it by defining $\widetilde u \in BV^\ell([0,1])$ as
\begin{equation*}
\widetilde u(x) := \int_0^x \min\{u',\gamma\}\, dx\,,
\end{equation*}
then $J_0^{**} (\widetilde u') = J_0^{**} (u')$ and $\Phi(x,\widetilde u) \leq \Phi(x,u)$. Hence $H(\widetilde u) = H(u)$ and $1 \in S_{\widetilde u}$. 
\end{example}

The next proposition yields information regarding the behaviour of $\Phi$ at the jumps of a minimizer $u\in BV^\ell([0,1])$ of $H$. 

\begin{proposition}
\label{JumpCondition2}
Suppose that Assumption $\mathscr{A}$ is satisfied.
Let $u \in BV^\ell([0,1])$ be a minimizer of $H$. Then, given $x_0 \in S_u \cap (0,1)$, the condition
\begin{gather}\label{Jumpdiscrepancy}
\Phi(x_0,u(x_0-))=\Phi(x_0,u(x_0+))=\min_{w\in [u(x_0-),u(x_0+)]} \Phi(x_0,w)
\end{gather}
is satisfied. 
If $x_0 \in S_u\cap \{0,1\}$, then the following holds true:
\begin{align*}
\Phi(0,u(0+))&=\min_{w\in [u(0-),u(0+)]} \Phi(0,w)\,,
\\
\Phi(1,u(1-))&= \min_{w\in [u(1-),u(1+)]} \Phi(1,w)\,.
\end{align*}
\end{proposition}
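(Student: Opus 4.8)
The plan is to test the minimality of $u$ against competitors that \emph{relocate and split} the jump at $x_0$ without ever modifying the absolutely continuous part $u'$. The key observation is that the bulk integrand $J_0^{\ast\ast}$ in $H$ depends only on $u'$, so any $v\in BV^\ell([0,1])$ with $D^s v\ge 0$ and $v'=u'$ almost everywhere satisfies $H(v)-H(u)=\int_0^1\big(\Phi(x,v)-\Phi(x,u)\big)\,dx$; comparing energies thus collapses to an elementary pointwise inequality for $\Phi$.

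So fix $x_0\in S_u\cap(0,1)$ and put $a:=u(x_0-)\le b:=u(x_0+)$ — these one-sided limits exist because $u$ is $BV$ in one dimension. For $w\in[a,b]$ and small $\delta>0$ I would use the competitor $\widetilde u_{w,\delta}$ equal to $u$ outside $(x_0,x_0+\delta)$ and to $u-(b-w)$ on $(x_0,x_0+\delta)$. A direct check shows $\widetilde u_{w,\delta}\in BV^\ell([0,1])$, $\widetilde u_{w,\delta}'=u'$ a.e., and $D^s\widetilde u_{w,\delta}\ge 0$: the jump at $x_0$ becomes an upward jump of height $w-a\ge0$, an upward jump of height $b-w\ge0$ appears at $x_0+\delta$, and all other jumps are untouched. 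Minimality gives $0\le H(\widetilde u_{w,\delta})-H(u)=\int_{x_0}^{x_0+\delta}\big(\Phi(x,u(x)-(b-w))-\Phi(x,u(x))\big)\,dx$; dividing by $\delta$, letting $\delta\to0$ and using $u(x)\to b$ as $x\to x_0+$ together with the continuity of $\Phi$, I obtain $\Phi(x_0,w)\ge\Phi(x_0,b)$ for all $w\in[a,b]$, in particular $\Phi(x_0,a)\ge\Phi(x_0,b)$. The reverse inequality $\Phi(x_0,b)\ge\Phi(x_0,a)$ comes from the mirror competitor adding $b-a$ to $u$ on $(x_0-\delta,x_0)$, which moves the jump to $x_0-\delta$; letting $\delta\to0$ with $u(x)\to a$ as $x\to x_0-$ yields it. Combining the three inequalities gives $\Phi(x_0,a)=\Phi(x_0,b)=\min_{w\in[a,b]}\Phi(x_0,w)$, i.e.\ \eqref{Jumpdiscrepancy}.

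For a jump at an endpoint only the one-sided construction is admissible. At $x_0=0$ I keep $\widetilde u=0$ on $(-1,0)$ and use only the shift on $(0,\delta)$; this gives $\Phi(0,w)\ge\Phi(0,u(0+))$ for every $w\in[u(0-),u(0+)]$ (with $u(0-)=0$), so $u(0+)$ realises $\min_{w\in[u(0-),u(0+)]}\Phi(0,w)$. The case $x_0=1$ is symmetric, shifting on $(1-\delta,1)$ and keeping $\widetilde u=\ell$ on $(1,2)$. Since no competitor can push the jump across the boundary, at the endpoints one recovers only that the free trace realises the minimum of $\Phi(x_0,\cdot)$ on the jump interval, not the two-sided balance \eqref{Jumpdiscrepancy}.

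The only delicate point is the passage $\delta\to0$, where one uses the existence of one-sided limits of the $BV$ function $u$ and the local boundedness of $\Phi$ (it is $C^2$ and $u$ takes values in a bounded set) to justify $\tfrac1\delta\int_{x_0}^{x_0+\delta}\Phi(x,u(x)-(b-w))\,dx\to\Phi(x_0,w)$ and its analogues. Everything else — admissibility of the competitors, preservation of $D^s\ge0$, and the cancellation $\int_0^1 J_0^{\ast\ast}(\widetilde u')=\int_0^1 J_0^{\ast\ast}(u')$ — is routine. I expect this limit, rather than the construction, to be the (mild) main obstacle.
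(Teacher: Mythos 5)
Your proposal is correct and follows essentially the same route as the paper: split the jump at $x_0$ by shifting $u$ by a constant on a small one-sided interval (which leaves $u'$ and hence the $J_0^{\ast\ast}$ term unchanged and keeps $D^s\widetilde u\ge 0$), divide the energy difference by the interval length, and pass to the limit using the one-sided traces of $u$ and the continuity of $\Phi$. The only cosmetic difference is that the paper phrases the argument by contraposition with one competitor and treats the other side ``similarly,'' whereas you derive both families of inequalities directly; the endpoint discussion matches as well.
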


Next we prove that the derivative of any minimizer of $H$ is bounded away from zero almost everywhere. Physically, this means that the ratio of compression of the material is bounded in the continuum limit.

\begin{proposition}
\label{GradientBoundedBelow}
Suppose that Assumption $\mathscr{A}$ is satisfied.
Then any minimizer $u\in BV^\ell([0,1])$ of $H$ satisfies $\essinf_{x\in [0,1]}u'(x)>0$.
\end{proposition}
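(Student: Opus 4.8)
The plan is a contradiction argument: assuming that the essential infimum of the slope vanishes, I would produce an admissible competitor with strictly smaller energy, obtained from $u$ by truncating its smallest slopes upward and compensating the gained length on a fixed ``reservoir'' set on which $J_0^{**}$ is Lipschitz.

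\emph{Preliminaries.} First I would record the structural facts about $J_0^{**}$ coming from Assumption~$\mathscr A$: it is real-valued and convex on $(0,\infty)$ (hence locally Lipschitz), non-increasing on $(0,\gamma]$ with $\lim_{z\to0^+}J_0^{**}(z)=+\infty$, and constant equal to its minimum $m:=\min J_0^{**}=J_0(\gamma)$ on $[\gamma,\infty)$. (The divergence at $0$ comes from the strong repulsion of $J_1,J_2$ via $J_0(z)\ge J_2(z)+\inf J_1$ and is inherited by the convex envelope through a short estimate on its two–point representation; the flatness on $[\gamma,\infty)$ is the ``unbounded set of minima'' mentioned in the introduction.) Since $u$ minimizes $H$ we have $H(u)<\infty$, so $\int_0^1 J_0^{**}(u')\,dx<\infty$; as $J_0^{**}(0)=+\infty$ this forces $u'>0$ a.e., and together with $D^su\ge0$ it makes $u$ non-decreasing, so $0\le u\le\ell$ on $[0,1]$ and, using $\Phi\in C^2$, $|\partial_u\Phi|\le C_\Phi$ on the compact set $[0,1]\times[0,\ell]$ for some constant $C_\Phi$.

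\emph{The competitor.} Suppose for contradiction that $\essinf_{[0,1]}u'=0$, i.e.\ $A_s:=\{x\in(0,1):u'(x)<s\}$ has positive measure for every $s>0$; since $u'>0$ a.e.\ one also has $|A_s|\to0$ as $s\to0^+$. I would fix $\beta>0$ small enough that the reservoir $R:=\{x:u'(x)\ge\beta\}$ has $|R|\ge\tfrac12$ (possible, as $|\{u'>0\}|=1$) and set $\kappa:=\operatorname{Lip}(J_0^{**};[\beta/2,\gamma])<\infty$. For $\varepsilon>0$ with $\varepsilon/2<\min\{\beta,\gamma\}$ put $A:=A_{\varepsilon/2}$ and $\mu:=\int_A(\gamma-u')\,dx\le\gamma|A|$. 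Because $|A|\to0$, for small $\varepsilon$ we have $\mu\le\tfrac{\beta}{4}\le\int_R(u'-\tfrac{\beta}{2})\,dx$, so we may pick $g\ge0$ supported in $R$ with $\int g=\mu$ and $u'-g\ge\beta/2$ on $R$. Define $u_\varepsilon\in BV^\ell([0,1])$ by keeping $D^su_\varepsilon:=D^su$ and setting the absolutely continuous density to $\gamma$ on $A$, to $u'-g$ on $R$, and to $u'$ elsewhere. Then $u_\varepsilon$ is non-decreasing with $D^su_\varepsilon\ge0$ and has unchanged total length $\ell$, so it is admissible for $H$; moreover $\|u_\varepsilon-u\|_\infty\le\int_0^1|u_\varepsilon'-u'|\,dx=2\mu$, whence the external-force part of the energy changes by at most $2C_\Phi\mu$. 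On $A$ one has $J_0^{**}(u_\varepsilon')=m$ while $J_0^{**}(u')\ge J_0^{**}(\varepsilon/2)$ by monotonicity, and on $R$ the elastic integrand changes by at most $\kappa g$ (using that $J_0^{**}$ is $\kappa$-Lipschitz on $[\beta/2,\gamma]$, constant on $[\gamma,\infty)$, and that $m$ is its minimum); therefore
\[
H(u_\varepsilon)-H(u)\ \le\ -\big(J_0^{**}(\tfrac{\varepsilon}{2})-m\big)\,|A|+\kappa\mu+2C_\Phi\mu\ \le\ |A|\,\Big[(\kappa+2C_\Phi)\gamma-\big(J_0^{**}(\tfrac{\varepsilon}{2})-m\big)\Big].
\]
Since $J_0^{**}(\varepsilon/2)\to+\infty$, the bracket is negative for $\varepsilon$ small; as $|A|>0$ this yields $H(u_\varepsilon)<H(u)$, contradicting minimality of $u$.

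\emph{Main obstacle.} The delicate point is entirely the design of the competitor. It must simultaneously (i) win a definite amount of elastic energy from the tiny-slope set, (ii) stay admissible -- the constraints $D^su\ge0$ and the fixed Dirichlet data forbid both raising $u'$ without compensation and compensating by a downward jump -- and (iii) keep the perturbation of $u$ (hence of $\int\Phi$) and the cost of the compensation both controlled by the same small factor $|A|$ that multiplies the gain. Raising $u'$ only on the \emph{very} small set $A_{\varepsilon/2}$ (rather than on all of $A_\varepsilon$) is what makes the gain and the added length comparable, and offloading that length onto a reservoir $\{u'\ge\beta\}$ with $\beta$ \emph{fixed} is what keeps the Lipschitz cost $\varepsilon$-independent. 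The remaining verifications -- that $J_0^{**}$, and not merely $J_0$, blows up at $0^+$, and that the constructed $u_\varepsilon$ lies in $BV^\ell([0,1])$ with $D^su_\varepsilon\ge0$ -- I expect to be routine.
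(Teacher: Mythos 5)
Your proposal is correct and follows essentially the same strategy as the paper: assume $\essinf u'=0$, build a competitor by raising the smallest slopes and compensating the gained length on a set where $u'$ is bounded away from zero (so that $J_0^{**}$ is Lipschitz there), and let the blow-up of $J_0^{**}$ at $0^+$ defeat the $O(|A|)$ cost of the compensation and of the $\Phi$-term. The only (immaterial) differences are that you truncate up to $\gamma$ rather than to an intermediate level $1/m$ and phrase the compensation via a mass-balanced function $g$ on a fixed reservoir instead of the paper's proportional redistribution on $C_m$.
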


We finally compute the Euler-Lagrange equation associated with $H$. The Euler-Lagrange equation yields continuity of $u'$ on a certain set.  

\begin{proposition}
\label{EulerLagrange}
Suppose that Assumption $\mathscr{A}$ is satisfied.
Any minimizer $u$ of $H$ is a solution to the
Euler-Lagrange-Equation
\begin{align}
\label{EulerLagrangeEq}
\int_0^1 {J_0^{\ast\ast}}'(u')\phi'+\frac{\partial\Phi}{\partial
u}(x,u)\phi ~dx=0 \quad \mbox{ for any } \phi\in C_0^\infty([0,1]).
\end{align}
Moreover $u'$ is continuous on the open set
\begin{align}
\label{SetSmallerGamma}
M_\gamma:= \{x\in [0,1]: u'(x)<\gamma\}\,.
\end{align}
\end{proposition}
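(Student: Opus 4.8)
The plan is to derive \eqref{EulerLagrangeEq} by computing the first variation of $H$ along smooth perturbations, and then to bootstrap the continuity of $u'$ from it by inverting the (strictly monotone) map ${J_0^{\ast\ast}}'$ on the region where $u'<\gamma$.

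For the Euler--Lagrange equation, fix $\phi\in C_0^\infty([0,1])$, i.e.\ $\phi$ compactly supported in $(0,1)$. Then $u+t\phi$ still belongs to $BV^\ell([0,1])$ for every $t\in\R$, and since $\phi$ is smooth we have $D^s(u+t\phi)=D^su\ge 0$; hence $u+t\phi$ is an admissible competitor of finite energy, so $t\mapsto H(u+t\phi)$ has an interior minimum at $t=0$, and it remains to differentiate this map at $t=0$ under the integral sign. For the force term this is immediate: $\Phi\in C^2$ and $u$, being $BV$ on a bounded interval, is bounded, so $(x,u(x)+t\phi(x))$ ranges in a fixed compact set and dominated convergence gives $\frac{d}{dt}\big|_{t=0}\int_0^1\Phi(x,u+t\phi)\,dx=\int_0^1\frac{\partial\Phi}{\partial u}(x,u)\phi\,dx$. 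For the bulk term the decisive input is Proposition~\ref{GradientBoundedBelow}: writing $\delta:=\essinf u'>0$, for $|t|<\delta/(2\|\phi'\|_\infty)$ one has $u'+t\phi'\ge\delta/2$ a.e., so the difference quotients $t^{-1}(J_0^{\ast\ast}(u'+t\phi')-J_0^{\ast\ast}(u'))$ are dominated by the constant $\|\phi'\|_\infty\sup_{[\delta/2,\infty)}|{J_0^{\ast\ast}}'|<\infty$, where we use that $J_0^{\ast\ast}$ is $C^1$ with derivative bounded away from the origin (a consequence of the structure of $J_0$ under Assumption~$\mathscr{A}$). Dominated convergence then yields $\frac{d}{dt}\big|_{t=0}\int_0^1 J_0^{\ast\ast}(u'+t\phi')\,dx=\int_0^1{J_0^{\ast\ast}}'(u')\phi'\,dx$, and $\frac{d}{dt}\big|_{t=0}H(u+t\phi)=0$ gives \eqref{EulerLagrangeEq}.

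For the continuity statement, I would read \eqref{EulerLagrangeEq} as saying that the ``stress'' $\sigma(x):={J_0^{\ast\ast}}'(u'(x))$ satisfies $\sigma'=\frac{\partial\Phi}{\partial u}(\cdot,u)$ in the sense of distributions. Since $u\in L^\infty$ and $\Phi\in C^2$, the right-hand side is a bounded measurable function of $x$, and together with $u'\ge\delta$ a.e.\ (so that $\sigma\in L^\infty$) this yields $\sigma\in W^{1,\infty}([0,1])$; in particular $\sigma$ has a Lipschitz representative $\bar\sigma$. On the set where $u'<\gamma$ we have $J_0^{\ast\ast}=J_0$ with $J_0$ strictly convex (Assumption~$\mathscr{A}$, cf.\ [H0]), so there ${J_0^{\ast\ast}}'={J_0}'$ is a continuous, strictly increasing bijection onto its image; moreover ${J_0^{\ast\ast}}'<0={J_0^{\ast\ast}}'(\gamma)$ on $(0,\gamma)$ while ${J_0^{\ast\ast}}'\ge 0$ on $[\gamma,\infty)$. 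Hence $M_\gamma$ agrees, up to a null set, with the open set $\{x:\bar\sigma(x)<0\}$, and on the latter $u'=({J_0}'|_{(0,\gamma)})^{-1}\circ\bar\sigma$ almost everywhere; since the right-hand side is continuous, this is the asserted continuous representative of $u'$ on $M_\gamma$.

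The routine ingredients — admissibility of $u+t\phi$, boundedness of $u$, and the elementary shape of $J_0^{\ast\ast}$ — cause no difficulty. The genuine obstacle is the differentiation under the integral sign in the bulk term: for Lennard--Jones-type potentials ${J_0^{\ast\ast}}'(z)$ is unbounded as $z\to0^+$, so no domination is available near the singularity, and it is precisely the a priori bound $\essinf u'>0$ of Proposition~\ref{GradientBoundedBelow} that makes the argument work. A secondary point requiring care is that $u'$, and hence $M_\gamma$, are a priori defined only up to null sets, so the final statement must be understood through the continuous representatives constructed above, and one should verify that $\{\bar\sigma<0\}$ is indeed the correct (open) representative of $M_\gamma$.
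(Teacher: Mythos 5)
Your proposal is correct and follows essentially the same route as the paper: a first variation along $u+t\phi$, with Proposition~\ref{GradientBoundedBelow} supplying the lower bound on $u'$ that makes the differentiation under the integral sign legitimate near the singularity of the potentials, followed by the observation that ${J_0^{\ast\ast}}'(u')$ is Lipschitz and that strict monotonicity of ${J_0^{\ast\ast}}'$ below $\gamma$ (together with its vanishing on $[\gamma,\infty)$) yields openness of $M_\gamma$ and continuity of $u'$ there. Your extra care with representatives and the explicit identification $M_\gamma=\{\bar\sigma<0\}$ up to a null set is a slightly more pedantic rendering of the same argument.
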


Next we consider the compactness properties of the $\Gamma$-limit of first order. The proofs of the above propositions are given in Section~\ref{proofproperties}.

\subsection{Compactness}\label{seccomp}

In this section we state the main results of this paper: the compactness of sequences with bounded rescaled energy $H_{1,n}^\ell$.  As mentioned earlier, this is of central importance for the derivation of the first order $\Gamma$-limit.

\begin{theorem}
\label{Compactness1}
Suppose that Assumption $\mathscr{A}$ is satisfied. Let $(u_n) \subset \mathcal{A}^\ell_n(0,1)$ be a sequence of configurations such that $\sup_n H_{1,n}^\ell(u_n) < +\infty$. Then the following statements hold:\\
\begin{itemize}
\item[(a)]
For every $\varepsilon > 0$ there exists a constant $C=C(\varepsilon)>0$ independent of $n$ (but possibly depending on the sequence $(u_n)_n$) such that
\begin{equation}\label{expl}
\#\left\{ i:\frac{u_n^{i+1}-u_n^i}{\lambda_n}\geq\gamma+\varepsilon
\right\}\leq C(\varepsilon)
\end{equation}
and 
\begin{equation}\label{eq17-?}
\#\left\{ i:\left|\frac{u_n^{i+1}-u_n^i}{\lambda_n}
-\frac{u_n^{i+2}-u_n^{i+1}}{\lambda_n}\right|\geq\varepsilon \right\} \leq C(\varepsilon)\, .
\end{equation} 

\item[(b)]
Set $\mathcal{I}_n := \left\{i: \frac{u_n^{i+1} - u^i_n}{\lambda_n} \leq \gamma^c \right\}$. Then there exists $C>0$ such that
\begin{gather} \label{eq17-1}
\sum_{i\in \mathcal{I}_n} \left(\frac{u_n^{i+1} - u_n^i}{\lambda_n} - \gamma \right)_+^2 \leq C\,. 
\end{gather}

\item[(c)]
There holds
\begin{gather} \label{liminfslopebigger}
\liminf_{n\rightarrow +\infty} \min_{i\in \{0,\ldots,n-1\}}
\frac{u_n^{i+1}-u_n^i}{\lambda_n}>0\,.
\end{gather}

\end{itemize}
\end{theorem}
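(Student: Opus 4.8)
The plan is to exploit the strategy outlined in the introduction: given a sequence $(u_n)$ with $\sup_n H_{1,n}^\ell(u_n)<+\infty$, construct for each $n$ two competitors $v_{1,n},v_{2,n}\in \mathcal{A}^\ell_n(0,1)$ (or in the domain of $H$) whose slopes are obtained by \emph{truncating} the discrete slopes of $u_n$ at level $\gamma$ (or slightly above), redistributing the excess length as jumps placed at carefully chosen points, and keeping $v_{1,n},v_{2,n}$ uniformly $L^\infty$-close to $u_n$ so that the external-force term $\sum_i \lambda_n\Phi(i\lambda_n,\cdot)$ changes by only $O(\lambda_n)$ times the number of modified bonds. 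Since $H(v_{j,n})\geq \inf_u H(u)$, we get
\begin{equation*}
\frac{1}{\lambda_n}\Big(H_n(u_n)-\tfrac12\big(H(v_{1,n})+H(v_{2,n})\big)\Big)\leq H_{1,n}^\ell(u_n)+\frac{1}{\lambda_n}\Big(\tfrac12\big(H(v_{1,n})+H(v_{2,n})\big)-\inf_u H(u)\Big)\cdot 0
\end{equation*}
— more precisely, $\frac{1}{\lambda_n}\big(H_n(u_n)-\tfrac12(H(v_{1,n})+H(v_{2,n}))\big)\leq H_{1,n}^\ell(u_n)$, up to the controlled error from $\Phi$. The left-hand side must then be shown to dominate a sum over bonds of a quantity comparable to $\big((u_n^{i+1}-u_n^i)/\lambda_n-\gamma\big)_+^2$ on the "convex regime" bonds and to something like $\varepsilon$ for each bond with slope $\geq\gamma+\varepsilon$; this simultaneously yields (a) (first estimate) and (b). The point of using \emph{two} competitors $v_{1,n},v_{2,n}$ (one truncating, say, at even-indexed positions, the other at odd, in the spirit of even–odd interpolation but without the large deformation of $u$) is to split the next-to-nearest-neighbour term $J_2\big((u_n^{i+2}-u_n^i)/(2\lambda_n)\big)$ so that the convexity of $J_2$ near $\gamma$ can be used on each of $u_n^{i+1}-u_n^i$ and $u_n^{i+2}-u_n^{i+1}$ separately; this is what produces the second estimate in (a), controlling the number of bonds on which consecutive slopes differ by at least $\varepsilon$.

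For part (b), on the set $\mathcal{I}_n$ of bonds with slope $\leq\gamma^c$ the relevant potentials $J_1,J_2$ are strictly convex on $(0,\gamma^c+c)$ by [H0], so after the truncation one can Taylor-expand: replacing a slope $z_i:=(u_n^{i+1}-u_n^i)/\lambda_n\in(\gamma,\gamma^c]$ by $\gamma$ lowers the interaction energy by at least $c'(z_i-\gamma)^2$ for a uniform $c'>0$, while $J_0^{\ast\ast}(\gamma)=J_0(\gamma)=\min J_0$ is the value picked up by the competitor. Summing over $i\in\mathcal{I}_n$ and absorbing the $O(1)$ error from $\Phi$ and from the finitely many "bad" bonds (those with slope $>\gamma^c$, whose number is already bounded by the first part of (a)) gives \eqref{eq17-1}.

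Finally, part (c) — the lower bound on the minimal slope — I would prove by a separate argument, by contradiction: if along a subsequence $\min_i z_i^{(n)}\to 0$, then by the strong repulsion at short distances encoded in Assumption~$\mathscr{A}$ (the term $J_1$, and $J_2$, blow up as their argument $\to 0^+$, or at least grow fast enough; this is exactly the "singular behaviour of the potentials at zero" mentioned after the theorem), the energy contribution $\lambda_n J_1(z_i^{(n)})$ of that one compressed bond would be large. One then balances this against the total length constraint and boundary conditions [B0]–[B1]: a very short bond forces other slopes to be larger, but the key is that the \emph{excess energy above} $\inf_u H$, divided by $\lambda_n$, must stay bounded; comparing with a competitor that spreads the deformation more evenly (again a truncated interpolation) shows that a slope tending to $0$ costs strictly more than $O(\lambda_n)$ in $H_n$, contradicting boundedness of $H_{1,n}^\ell$. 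The main obstacle throughout is the bookkeeping in the step-by-step construction of $v_{1,n},v_{2,n}$: one must ensure simultaneously (i) the truncated slopes are admissible and the inserted jumps are nonnegative (so $D^s v_{j,n}\geq 0$ and $v_{j,n}$ lies in the domain of $H$), (ii) $v_{j,n}$ stays $O(1)$-close to $u_n$ in sup norm so $\Phi$ contributes only a controlled error, and (iii) the energy gap really is bounded below by the desired quadratic/linear functional of $(z_i-\gamma)_+$ — this last point requires delicate use of the inf-convolution structure \eqref{J0def} relating $J_0$, $J_1$ and $J_2$, since the competitor's energy is measured by $J_0^{\ast\ast}$ while $u_n$'s energy is measured by $J_1$ and $J_2$ directly.
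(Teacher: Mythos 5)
Your proposal follows essentially the same route as the paper's proof: even--odd interpolants whose slopes are truncated at $\gamma$ with compensating nonnegative jumps (and a final translation keeping them uniformly close to $u_n$ so that $\Phi$ contributes only $O(\lambda_n)$ per bond), the inequality $\frac{1}{\lambda_n}\bigl(H_n^\ell(u_n)-\frac{1}{2}(H(v_{1,n})+H(v_{2,n}))\bigr)\leq H_{1,n}^\ell(u_n)$, a lower bound for this gap by the inf-convolution remainder $R$ plus $\frac{1}{\lambda_n}\int(J_0-J_0^{\ast\ast})$ evaluated at the averaged even--odd slopes, and, for part (c), a comparison with a configuration that transfers length from stretched to compressed bonds so that each compressed bond gains $J_1(S)-J_1(K)\to+\infty$ as $S\to0$. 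Two caveats are worth recording. First, in (b) your claim, read literally, that truncating a single slope $z_i\in(\gamma,\gamma^c]$ to $\gamma$ lowers the interaction energy by $c'(z_i-\gamma)^2$ is false, since $J_1'(\gamma)\neq 0$ in general; the quadratic gain has to be assembled from the strict convexity of $J_0$ at the \emph{averaged} slopes together with the bound $R(z_1,z_2)\geq c|z_1-z_2|^2$ of Lemma~\ref{Jonecomp} and the elementary splitting $(z_i-\gamma)_+^2\leq 2(\tfrac{z_{i-1}+z_i}{2}-\gamma)_+^2+\tfrac12|z_i-z_{i-1}|^2$ --- exactly the inf-convolution bookkeeping your last paragraph identifies as the main obstacle. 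Second, in (c) the contradiction requires not only $H_n(u_n)-H_n(\widetilde u_n)\gg\lambda_n$ but also a uniform lower bound $H_{1,n}^\ell(\widetilde u_n)\geq -M$ for the modified competitor; this is supplied by the lower bound \eqref{estfirst} from the part (a) machinery, and without it the comparison does not close.
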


Some comments are in order.  \\
Part (a) of Theorem~\ref{Compactness1} shows that 
sequences of configurations that keep the rescaled energies $H_{1,n}^\ell$ uniformly bounded have only a finite number of bonds such that $(u_n'-\gamma)_+ \geq \varepsilon$.
We remark that in contrast to the previous works \cite{BraidesCicalese, ScardiaSchloemerkemperZanini} we do not expect the $L^1$ limit of an equibounded sequence to have derivative equal to $\gamma$ almost everywhere if $\ell \geq \gamma$. This is due to the effect of the external force applied on the system.
Part (b) provides a more precise information on the magnitude of $(u_n-\gamma)_+$.
In part (c) we prove that the ratio of compression of the material remains uniformly bounded along sequences for which the rescaled energies $H_{1,n}^\ell$ are bounded.  
This is the asymptotic counterpart of Proposition~\ref{GradientBoundedBelow}. It ensures that in the derivation of the first order $\Gamma$-limit the singular behaviour of the potentials is in fact immaterial.
The proofs of parts (a), (b) and (c) of Theorem~\ref{Compactness1} can be found in Sections~\ref{a}, \ref{b} and \ref{c}, respectively.

\medskip

As a consequence of Theorem~\ref{Compactness1} we deduce the following result about the convergence of a sequence of configurations equibounded in energy. The proof is an adaptation of Theorem 3.1 in \cite{BraidesGelli2002} (see also \cite{BraidesCicalese}); it can be found in Section~\ref{c}.

\begin{proposition}
\label{Compactness3}
Suppose that Assumption $\mathscr{A}$ is satisfied. Let $(u_n)_n \subset \mathcal{A}^\ell_n(0,1)$ be a sequence of configurations such that $\sup_{n\rightarrow +\infty}
H_{1,n}^\ell(u_n)<+\infty$. 
Then, up to a subsequence, $u_n\rightarrow u$ strongly in $L^1(0,1)$, where $u\in SBV^\ell([0,1])$ is such that
\begin{itemize}
\item[$(i)$] $\#S_u <+\infty$,
\item [$(ii)$] $[u] > 0$ in $S_u$,
\item [$(iii)$] $u' \leq \gamma$ almost everywhere in $(0,1)$.
\end{itemize}   
Here, $[u]$ denotes the difference of the left and the right limits of $u$.
\end{proposition}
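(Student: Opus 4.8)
The plan is to deduce Proposition~\ref{Compactness3} from Theorem~\ref{Compactness1} essentially by following the classical argument of \cite{BraidesGelli2002, BraidesCicalese}, adapted to the fact that here we only control $(u_n'-\gamma)_+$ rather than $|u_n'-\gamma|$. First I would pass to the piecewise affine interpolants $u_n \in \mathcal{A}^\ell_n(0,1)$ and split the set of bonds $\{0,\ldots,n-1\}$ according to the slope $s_n^i := (u_n^{i+1}-u_n^i)/\lambda_n$. By Theorem~\ref{Compactness1}(a) (with, say, $\varepsilon = 1$, or any fixed $\varepsilon$), the number of bonds with $s_n^i \geq \gamma + \varepsilon$ is bounded by a constant $C$ independent of $n$; call these the ``long'' bonds. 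By Theorem~\ref{Compactness1}(c) there is $\delta>0$ and an index $N$ such that for $n\geq N$ all slopes satisfy $s_n^i \geq \delta$, so no bond is arbitrarily compressed. On the complement — the bonds with $\delta \leq s_n^i < \gamma + \varepsilon$ — the slopes are uniformly bounded, hence the total length contributed is controlled and in particular $\|u_n'\|_{L^1}$ stays bounded together with $|u_n| \leq |\ell| + (\text{bounded})$, so $\sup_n \|u_n\|_{BV(0,1)} < \infty$.

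Next I would extract a (not relabelled) subsequence with $u_n \to u$ strongly in $L^1(0,1)$ and weakly-$*$ in $BV$, using the compact embedding $BV(0,1) \compemb L^1(0,1)$; the boundary conditions [B0], [B1] pass to the limit so $u \in BV^\ell([0,1])$. To see that $u \in SBV^\ell([0,1])$ with $\#S_u \leq C$, I would use the standard device: each long bond $i$ can be ``cut'', replacing $u_n$ by a function $\hat u_n$ that jumps at $i\lambda_n$ by the amount $\lambda_n s_n^i$ and has slope $\min\{s_n^i, \gamma+\varepsilon\}$ there, while leaving the other bonds untouched; since there are at most $C$ long bonds, $\hat u_n$ lies in $SBV^\ell([0,1])$ with $\#S_{\hat u_n} \leq C$, uniformly bounded derivative $\hat u_n' \leq \gamma + \varepsilon$ a.e., and $\|u_n - \hat u_n\|_{L^1} \to 0$ because the total ``jump mass'' $\sum_{i \text{ long}} \lambda_n s_n^i \to 0$ (each term is $O(\lambda_n)$ and there are $O(1)$ of them). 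Hence $\hat u_n \to u$ in $L^1$ too, and by Ambrosio's $SBV$ closure theorem — applicable since $\hat u_n'$ is bounded in $L^\infty$ and $\#S_{\hat u_n}$ is bounded, so $|D^s \hat u_n|$ is a sum of boundedly many positive atoms of bounded total mass — the limit $u$ is in $SBV$ with $\#S_u \leq C$ and $[u] \geq 0$ on $S_u$; strict positivity $[u]>0$ on $S_u$ is automatic from the definition of $S_u$ as the set where the jump is nonzero.

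For (iii), $u' \leq \gamma$ a.e., I would argue by lower semicontinuity / weak-$*$ convergence of measures. Note $(u_n' - \gamma)_+\,dx$ is a nonnegative measure whose total mass is $\sum_i \lambda_n (s_n^i - \gamma)_+ = \sum_{i \text{ long}} \lambda_n (s_n^i - \gamma)_+ + \sum_{i: \gamma \leq s_n^i < \gamma+\varepsilon}\lambda_n(s_n^i-\gamma)_+$; the first sum is $O(\lambda_n)$ (boundedly many terms, each $O(\lambda_n)$ since $s_n^i$ on a long bond could a priori be large — here I would instead be slightly more careful and split the long bonds at slope $\gamma$ rather than $\gamma+\varepsilon$, so that after cutting, $\hat u_n' \leq \gamma$ a.e. exactly), and the second sum tends to $0$ by Theorem~\ref{Compactness1}(b), since on $\mathcal{I}_n$ with $\gamma \leq s_n^i \leq \gamma^c$ we have $(s_n^i - \gamma)_+ \leq \sqrt{(s_n^i-\gamma)_+^2}$ and by Cauchy--Schwarz $\sum \lambda_n (s_n^i - \gamma)_+ \leq (\sum \lambda_n)^{1/2}(\sum \lambda_n (s_n^i-\gamma)_+^2)^{1/2} \leq C \lambda_n^{1/2} \to 0$. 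Combining, the absolutely continuous part of $Du_n$ satisfies $\int_0^1 (u_n' - \gamma)_+\,dx \to 0$, so after cutting the long bonds $\hat u_n' \leq \gamma$ a.e. and $\hat u_n \to u$ in $L^1$; since the constraint $\{\cdot \leq \gamma\}$ is convex and closed, it is preserved in the weak-$*$ limit of the absolutely continuous parts, giving $u' \leq \gamma$ a.e.

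The main obstacle I expect is the bookkeeping needed to run the cutting argument cleanly: one must choose the truncation threshold (at $\gamma$, not $\gamma+\varepsilon$) so that the cut functions simultaneously (i) have derivative $\leq \gamma$ a.e., (ii) have a uniformly bounded number of jumps with positive sign, (iii) converge to the same $L^1$ limit as $u_n$, and (iv) have the remaining small-slope excess $\int (u_n' - \gamma)_+$ over non-cut bonds genuinely negligible — this last point is exactly where part (b) is needed and where the ``only the positive part is controlled'' feature forces the use of the $L^2$-type estimate rather than the cruder arguments of \cite{BraidesCicalese}. Once the approximating sequence $\hat u_n$ is in place, the remaining steps (BV compactness, $SBV$ closure, passing convex constraints to the limit) are standard.
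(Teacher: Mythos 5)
Your proposal is essentially correct but follows a genuinely different route from the paper. The paper's proof isolates the ``bad'' next-to-nearest-neighbour bonds $I_n:=\{i: (u_n^{i+2}-u_n^i)/(2\lambda_n)>\sqrt{n}\}$, replaces $u_n$ there by a piecewise constant function $v_n$, and then applies the first-order $\Gamma$-convergence result of Braides--Gelli (with the rescaled potentials $F_n(z)=\widetilde J_0(z)/\lambda_n$ and $G_n$) to the functional $\mathcal{H}_n(u_n)=\sum_i (J_0-J_0^{**})\bigl(\tfrac{u_n^{i+2}-u_n^i}{2\lambda_n}\bigr)$, which is bounded by \eqref{estfirst}; the limiting densities $F$ and $G$ then deliver $u\in SBV^\ell$, $\#S_u<\infty$, $[u]>0$ and $u'\leq\gamma$ in one liminf inequality, without ever using part (b) of Theorem~\ref{Compactness1}. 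You instead run a hands-on cutting argument plus Ambrosio's closure theorem and obtain (iii) from part (b) via Cauchy--Schwarz and convexity/lower semicontinuity. Both work; the paper's route outsources the structure theory to a known $\Gamma$-limit, while yours makes the role of the quadratic estimate (b) explicit and is arguably more self-contained.

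Two points in your write-up need repair, though neither is fatal. First, the claim that the total jump mass $\sum_{i\,\mathrm{long}}\lambda_n s_n^i$ tends to $0$ is false: this sum equals the total increment of $u_n$ over the long bonds, which converges to the jump mass of the limit $u$ and is in general positive. What is true, and what you actually need, is that the $L^1$ distance $\|u_n-\hat u_n\|_{L^1}$ is bounded by $\lambda_n$ times that increment, hence by $C\lambda_n\ell\to 0$. Second, your suggestion to truncate at slope $\gamma$ rather than $\gamma+\varepsilon$ cannot be carried out literally: part (a) only bounds the number of bonds with slope $\geq\gamma+\varepsilon$ for each fixed $\varepsilon>0$, so cutting every bond with slope exceeding $\gamma$ may produce $O(n)$ jumps and destroys the uniform bound on $\#S_{\hat u_n}$ needed for the closure theorem. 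The consistent version is the one you sketch in the last part of your argument: cut only at threshold $\gamma+\varepsilon$ (or $\gamma^c$), accept that $\hat u_n'\leq\gamma$ fails pointwise, show $\int_0^1(\hat u_n'-\gamma)_+\,dx\to 0$ using part (b) on $\mathcal{I}_n$ together with the finiteness of the remaining bonds, and conclude $u'\leq\gamma$ a.e.\ from the weak lower semicontinuity of the convex functional $v\mapsto\int(v-\gamma)_+$. With these corrections your argument is complete.
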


As a consequence of Proposition~\ref{Compactness3}, no information is encoded in the first order $\Gamma$-limit if all minimizers of $H$ have slope strictly bigger than $\gamma$ in some set of positive measure or have a non-zero Cantor part of the derivative. Due to the effect of the external force this can happen easily as the following example shows.

\begin{example} \label{ex-quadr}
Consider an arbitrary function $\widetilde w \in C^2(0,1)$ such that $\widetilde w(0) = 0$, $\widetilde w(1)=\ell$ and $\widetilde w'(x) > \gamma$ for every $x\in (0,1)$. Then, choosing $\Phi(x,u) = (u-\widetilde w)^2$, we obtain that the function $u=\widetilde w$ is the unique minimizer of $H$. Hence for this choice of the potential $\Phi$ all the minimizers of $H$ have slope bigger than $\gamma$ everywhere. From Theorem~\ref{Compactness3} (iii) it follows that the first order $\Gamma$-limit is infinite and so it does not provide additional information on the behaviour of the system. 
\end{example}

As a positive result we show that under some further assumptions on the external force, there always exists a minimizer of $H$ with derivative bounded by $\gamma$. The proof can be found in Section~\ref{c}. Examples of potentials satisfying these conditions are $\Phi(x,u) = (u-\widetilde w)_+^4$  or $\Phi(x,u)=(u-\tilde w)_-^4$ with $\widetilde{w}$ as in Example~\ref{ex-quadr}.
\begin{proposition}
\label{LimitationsCondition}
Suppose that Assumption $\mathscr{A}$ is satisfied.
Assume in addition that for any $x\in[0,1] $
\begin{equation}\label{mpf}
\sign \frac{\partial \Phi}{\partial u}(x,w)  \quad \mbox{ is independent of }  w\in\R  
\end{equation}
and that $\sign \frac{\partial\Phi}{\partial u}$ only changes
its value at finitely many points in $[0,1]$. Then there exists a minimizer $u \in BV^\ell([0,1])$
of $H$ with $u'(x)\leq \gamma$ for almost every $x\in (0,1)$ and $|D^c u|([0,1])=0$. 
\end{proposition}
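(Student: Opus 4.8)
The plan is to start from an arbitrary minimizer $u \in BV^\ell([0,1])$ of $H$, whose existence is guaranteed by Proposition~\ref{MinimizerExistence}, and to modify it on the (finitely many) intervals where $\sign \partial_u \Phi$ is nonpositive so as to obtain a competitor $\widetilde u$ with $\widetilde u' \leq \gamma$ a.e.\ and $|D^c\widetilde u|([0,1]) = 0$, without increasing the energy. The key observation is the one already exploited in Example~\ref{exedead}: on any interval $(a,b)$ on which $\partial_u\Phi(x,w) \leq 0$ for all $w$, replacing a deformation by the one with truncated slope $\min\{u',\gamma\}$ (keeping the left endpoint fixed) does not increase $J_0^{**}(u')$ — since $J_0^{**}$ is convex and, by Assumption~$\mathscr{A}$, nondecreasing for slopes above $\gamma$, so that $J_0^{**}(\min\{u',\gamma\}) \leq J_0^{**}(u')$ — while it \emph{decreases} the pointwise value of the deformation, hence decreases $\Phi(x,\cdot)$ along the interval because $\partial_u\Phi \leq 0$ there. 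Symmetrically, on an interval where $\partial_u\Phi(x,w) \geq 0$ for all $w$, one wants to keep the \emph{right} endpoint fixed and truncate the slope from below so as to raise the deformation; this again lowers $\Phi$.

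The main step is to stitch these local modifications together into a single function that still satisfies the boundary condition $\widetilde u(0)=0$, $\widetilde u(1)=\ell$, and to control the interfaces between intervals of opposite sign of $\partial_u\Phi$. Let $0 = t_0 < t_1 < \cdots < t_N = 1$ be the finitely many points where $\sign\partial_u\Phi$ changes; on each $(t_{k-1},t_k)$ the sign is constant. I would process the intervals from left to right, alternately anchoring at the left endpoint (on "$\leq 0$" intervals, truncating slope from above at $\gamma$) or carrying over the accumulated deficit/surplus in the value of $\widetilde u$ relative to $u$ to the next interface, where it can be absorbed into a jump. Because $D^s u \geq 0$ is required in the domain of $H$, and because truncating the slope frees up "length" that must be recovered, these jumps come out with the correct sign (upward) precisely when the boundary data and the sign pattern are compatible; in the remaining cases one anchors at the right endpoint on "$\geq 0$" intervals instead. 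A clean bookkeeping device is to define $\widetilde u' := \min\{u',\gamma\}$ globally, compute the total length defect $\delta := \int_0^1 (u' - \min\{u',\gamma\})\,dx = \int_0^1 (u'-\gamma)_+\,dx \geq 0$ plus $D^s u([0,1])$, and redistribute this nonnegative total mass as upward jumps located inside the intervals where $\partial_u\Phi \leq 0$ (pushing the graph down there is cheap) — one must check such intervals are nonempty whenever $\delta > 0$, which follows because if $\partial_u\Phi \geq 0$ everywhere then by Proposition~\ref{JumpCondition} and a direct comparison the minimizer already has $u' \leq \gamma$ and no singular part except possibly at $x=1$, where a jump is harmless.

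I expect the genuine obstacle to be the \emph{interface analysis}: when $\sign\partial_u\Phi$ flips at some $t_k$, the modification strategy must flip too (anchor switches from left-endpoint to right-endpoint), and one has to verify that the piecewise construction can be glued with a single jump at $t_k$ of the admissible (nonnegative) sign, and that this jump is placed where $\Phi$ does not penalize it — i.e.\ one needs the discrete analogue of the jump condition \eqref{Jumpdiscrepancy} to hold for the constructed competitor, or else to argue that the energy still does not increase even if \eqref{Jumpdiscrepancy} fails, using only $\partial_u\Phi$ having a fixed sign on each side. A careful case distinction on the sign pattern near $0$ and near $1$ (to handle the boundary jumps allowed by $BV^\ell$) closes the argument; once the competitor $\widetilde u$ is constructed with $H(\widetilde u) \leq H(u)$, minimality of $u$ forces $H(\widetilde u) = H(u)$, so $\widetilde u$ is itself a minimizer, and by construction $\widetilde u' = \min\{u',\gamma\} \leq \gamma$ a.e.\ and $D^c\widetilde u = 0$ since all singular mass was concentrated into finitely many jumps.
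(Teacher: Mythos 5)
Your overall architecture is the same as the paper's (truncate the slope at $\gamma$, redistribute the lost mass as nonnegative jumps, use the constant sign of $\partial_u\Phi$ on each of the finitely many subintervals to ensure $\Phi$ does not increase), but the central mechanism is stated with the signs inverted, and the step you yourself flag as ``the genuine obstacle'' is left unresolved. On an interval where $\partial_u\Phi\leq 0$, the potential $\Phi(x,\cdot)$ is \emph{nonincreasing} in $w$, so anchoring at the left endpoint and truncating the slope — which pushes the graph \emph{down} — makes $\Phi$ larger, not smaller; and raising the deformation where $\partial_u\Phi\geq 0$ likewise increases $\Phi$. The correct pairing (consistent with Example~\ref{exedead}, where $\partial_u\Phi=-f>0$ and the truncated, hence lower, competitor is used) is the opposite: where $\partial_u\Phi\geq 0$ one wants $\widetilde u\leq u$, and where $\partial_u\Phi\leq 0$ one wants $\widetilde u\geq u$. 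With your sign convention the comparison $H(\widetilde u)\leq H(u)$ fails.

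The second gap is the gluing. Your ``bookkeeping device'' of truncating globally and then inserting the total lost mass as jumps in the ``$\partial_u\Phi\leq 0$'' intervals does not control the sign of $\widetilde u-u$ pointwise on each subinterval, which is exactly what is needed for the force comparison; you acknowledge the interface analysis is the crux but do not carry it out. The paper's resolution is a concrete choice of additive constant on each piece $(x_i,x_{i+1})$: set $\widetilde u(x)=\int_{x_i}^x\min\{u',\gamma\}\,dy+a_i$ with $a_i=u(x_i+)$ when $\partial_u\Phi\geq 0$ there (so $\widetilde u\leq u$ on the whole piece), and $a_i=u(x_i+)+\int_{x_i}^{x_{i+1}}(u'-\gamma)_+\,dy+|D^su|(x_i,x_{i+1})$ when $\partial_u\Phi\leq 0$ there (so $\widetilde u\geq u$ on the whole piece). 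One then checks directly that in either case $\widetilde u(x_{i+1}-)\leq u(x_{i+1}-)\leq u(x_{i+1}+)\leq a_{i+1}$, so every interface jump is automatically nonnegative and no condition of the type \eqref{Jumpdiscrepancy} needs to be verified for the competitor; the elastic part is unchanged because $J_0^{\ast\ast}(\min\{u',\gamma\})=J_0^{\ast\ast}(u')$ by \eqref{J0starstar}. Without this explicit choice and the endpoint inequality, your argument does not close.
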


\section{Assumption $\mathscr{A}$ on interaction potentials and external forces}\label{assumptions}

We say that Assumption~$\mathscr{A}$ is satisfied if the interaction potentials $J_1$ and $J_2$  as well as the effective potential $J_0$ defined in \eqref{J0def} satisfy conditions [H0]--[H5] and if the external force is described by a potential $\Phi$ satisfying condition [$\Phi1$] stated as follows:
\begin{enumerate}
  \item[\textrm{[H0]}](strict convexity on a bounded interval) There exists $\gamma^c \in (0,+\infty)$ and $c>0$ such that $J_j$ is strictly convex in $(0,\gamma^c + c)$ for $j=0,1$. 
  
  \item[\textrm{[H1]}](regularity) $J_j\in C^2(0,\infty)$ for $j=0,1,2$.
  \item[\textrm{[H2]}](uniqueness of minimal energy configurations) For any fixed $z\in (0,\gamma^c)$
  \begin{gather*}
  \min\left\{J_2(z) + \frac{1}{2}\left(J_1(z_1)+J_1(z_2)\right) : \frac{1}{2}(z_1 + z_2) = z   \right\}
  \end{gather*}
  is attained at exactly $z_1 = z_2 = z$.
  \item[\textrm{[H3]}](behaviour at infinity) $J_j(z)\rightarrow
  J_j(\infty)\in \mathbb{R}$ for $z\rightarrow +\infty$ for $j=0,1,2$.  
  \item[\textrm{[H4]}](structure of $J_0$) $J_0$ has a unique
  minimum point $\gamma<\gamma^c$ with $\inf_{z\in
  [\gamma^c,+\infty)}J_0(z)>J_0(\gamma)$.
  \item[\textrm{[H5]}] $J_j(z)=+\infty$ for $z\leq 0$ and
  $J_j(z)\rightarrow +\infty$ as $z\rightarrow 0$ for $j=1,2$.
\end{enumerate}
We remark that assumption [H0] about the strict convexity of $J_1$ up to $\gamma^c$ is needed in part (b) of Theorem~\ref{Compactness1}. The other hypotheses are classical in the context of one-dimensional, non-convex discrete to continuum theory (see for example \cite{BraidesCicalese,ScardiaSchloemerkemperZanini}).\\

Our assumption on the potential $\Phi$ is as follows:
\begin{enumerate}
  \item[\textrm{[$\Phi1$]}] $\Phi\in C^2([0,1]\times \mathbb{R})$.
\end{enumerate}

\begin{remark}
Assumptions [H0] and [H4] imply that
\begin{equation} \label{J0starstar}
J_0^{\ast \ast}(z) := \left\{
\begin{array}{ll}
J_0(z) & z<\gamma\, ,\\
J_0(\gamma) & z \geq \gamma\,.
\end{array}
\right.
\end{equation}
\end{remark}
\begin{remark}[Lennard-Jones potentials]
The classical Lennard-Jones potentials satisfy the assumptions [H0]--[H5]. Indeed for given $c_1,c_2 >0$ we define 
\begin{equation*}
J_1(z) = \frac{c_1}{z^{12}} - \frac{c_2}{z^6} \quad \mbox{and} \quad J_2(z) = J_1(2z)
\end{equation*}
for $z>0$ and we extend them to $+\infty$ on $(-\infty,0]$. One can check that $J_1, J_2$ and $J_0$ satisfy [H0]--[H5] (see Remark 4.1 in \cite{ScardiaSchloemerkemperZanini} for the detailed computation).
\end{remark}
%

\section{Proofs for Proposition~\ref{IdentificationGammaLimit} and Section~\ref{MinimizersOfLimit}}\label{proofproperties}

\begin{proof}[\textbf{Proof of Proposition~\ref{IdentificationGammaLimit}}]

As the result for $\Phi \equiv 0$ has already been proved in \cite{ScardiaSchloemerkemperZanini} it is enough to show the convergence of the force term for any converging sequence $(u_n)_n \subset \mathcal{A}^\ell_n(0,1)$.

Let $(u_n)_n \subset \mathcal{A}^\ell_n(0,1)$ be a sequence of discrete configurations converging to some limit
$u$ in $L^1(0,1)$.
We can suppose without loss of generality that $u'_n > 0$ for every $n \in \N$. Indeed if there exists a sequence $n_k$ such that $u_{n_k}'(x)\leq 0$ in an interval, then $H(u_{n_k}) = +\infty$ for every $k$ and $H(u) = +\infty$ thanks to assumption [H5].
Thus we have
\begin{align*}
\MoveEqLeft{ \left|\sum_{i=0}^n \lambda_n \Phi(i\lambda_n,u_n^i)
-\int_{0}^1 \Phi(x,u(x))~dx\right| }
\\
\leq&
\sum_{i=0}^{n-1}
\bigg|
\lambda_n \Phi(i\lambda_n,u_n(i\lambda_n))
-\int_{i\lambda_n}^{(i+1)\lambda_n} \Phi(y,u(y)) ~dy
\bigg|
+ \lambda_n \sup_{x\in {[0,1]},0\leq w\leq \ell} |\Phi(x,w)|
\\
\leq&
\sum_{i=0}^{n-1} \int_{i\lambda_n}^{(i+1)\lambda_n}
|\Phi(y,u(y)) - \Phi(i\lambda_n,u_n(i\lambda_n))|\, dy + C\lambda_n 
\\
\leq&
\sum_{i=0}^{n-1} \sup_{x\in {[0,1]},0\leq w\leq \ell} |\nabla \Phi(x,w)|
\left(\lambda_n^2 +\int_{i\lambda_n}^{(i+1)\lambda_n}|u(y)-u_n(i\lambda_n)|	\, dy\right) + C\lambda_n\\
\leq& C\sum_{i=0}^{n-1} \int_{i\lambda_n}^{(i+1)\lambda_n}|u(y)- u_n(y)|\, dy
+ C\sum_{i=0}^{n-1} \int_{i\lambda_n}^{(i+1)\lambda_n}|u_n(y)-u_n(i\lambda_n)|\, dy + C\lambda_n\,.
\end{align*}
Thus as $u_n \rightarrow u$ in $L^1(0,1)$ it is enough to prove that 
\begin{gather}\label{noosc}
\lim_{n\rightarrow +\infty}  \sum_{i=0}^{n-1} \int_{i\lambda_n}^{(i+1)\lambda_n}|u_n(y)-u_n(i\lambda_n)|\, dy = 0\,.
\end{gather}
Indeed by the fact that $u_n$ is increasing for every $n$ we have
\begin{align*}
\sum_{i=0}^{n-1} \int_{i\lambda_n}^{(i+1)\lambda_n}&|u_n(y)-u_n(i\lambda_n)|\, dy \leq \sum_{i=0}^{n-1} \int_{i\lambda_n}^{(i+1)\lambda_n}u_n\left(\left(i+1\right)\lambda_n\right)-u_n(i\lambda_n)\, dy 
=\lambda_n\ell
\end{align*}
that yields \eqref{noosc}.

\end{proof}

\begin{proof}[\textbf{Proof of Proposition~\ref{MinimizerExistence}}]
By Proposition~\ref{IdentificationGammaLimit}, $H$ is the $\Gamma$-limit of some functional with respect to the $L^1(0,1)$-topology. Hence it is lower semicontinuous in $L^1(0,1)$ (cf., \cite[Proposition~1.28]{Braides}). As the weak-$\ast$ convergence of $(u_k)_k$ towards $u$ in $BV([0,1])$ implies that $u_k
\rightarrow u$ in $L^1(0,1)$, $H$ is sequentially lower semicontinuous with respect to weak-$\ast$ convergence in $BV([0,1])$.

Moreover, given $(u_k)_k \subset BV^\ell([0,1])$ of $H$ satisfying $\sup_k H(u_k)<+\infty$ there holds that $u'_k> 0$ because otherwise we had $J_0^{**} (u'_k) = +\infty$ by [H5].  Hence $u_k$ is monotone increasing and bounded by $\ell$ for every $k\in \N$. Thus $||u_k||_{L^1(0,1)}\leq \int_0^1 \ell ~dx=\ell$ and $|D u_k| ([0,1])=\ell-0$. This implies $||u_k||_{BV([0,1])} \leq C$ uniformly in $k$.

By the direct method of the calculus of variations we thus get existence of a minimizer (see also \cite[Theorem~1.21]{Braides}).
\end{proof}

\begin{proof}[\textbf{Proof of Proposition~\ref{JumpCondition}}]
Assume that the conclusion does not hold. We then show that $u$ is not a minimizer
of $H$. Let $\lambda$ be the measure defined by
\begin{gather*}
\lambda(A):=\int_{A\cap [0,1]} (u'-\gamma)_+ dx+D^s u (A\cap [0,1])
\end{gather*}
for any Borel set $A\subset\mathbb{R}$. If the conclusion does not hold, then the support of $\lambda$ would not be contained in $M$. Choose a point $m \in M$ and set
\begin{gather*}
\widetilde{u}(x):=\int_0^x \min\{u',\gamma\} ~dy
+\lambda([0,1]) \cdot \chi_{\{x:x>m\}}(x)\,.
\end{gather*} 
Following our definition of $BV^\ell([0,1])$ and observing that $Du((-1,x]) = Du([0,x])$ for any $x\in [0,1]$ we obtain for the right continuous good representative of $u$, again denoted by $u$, see \cite[Theorem 3.28]{AmbrosioFuscoPallara}, that  
\begin{align}\label{fundthm}
u(x)=\int_0^x u'(y) ~dy+D^su([0,x]), \quad x\in [0,1] \, .
\end{align}
Hence, as $\int_0^x \min\{u',\gamma\}~dy - \int_0^x u' ~dy = - \int_0^x (u'-\gamma)_+~dy$, we infer that 
\begin{align*}
\widetilde u (x)-u(x)
=& -\lambda([0,x])
+\lambda([0,1]) \cdot \chi_{\{x:x>m\}}(x)\\
=&-\int_0^x d(\lambda-\lambda([0,1])\delta_m)\, ,
\end{align*}
where $\delta_m$ denotes the Dirac measure concentrated in the point $m$. 
Obviously, $\widetilde{u} \in BV^\ell([0,1])$. We have $u'=\widetilde{u}'$ outside
the set $\{u'>\gamma\}$, inside of which we have $\widetilde{u}'=\gamma$. Since by \eqref{J0starstar} $J_0^{\ast\ast}(v)=J_0^{\ast\ast}(\gamma)$ for $v\geq \gamma$, for $\mu\in [0,1]$ we calculate
\begin{gather*}
H(u)-H((1-\mu)u+\mu \widetilde{u})=\int_0^1
\Phi(x,u(x))-\Phi(x,(1-\mu)u(x)+\mu\widetilde{u}(x))
~dx\, .
\end{gather*}
We now show that this contradicts the assumption that $u$ is a minimizer of $H$. Note that we choose convex combinations of $u$ and $\widetilde{u}$ here to ensure that the jumps do not become negative and we are dealing with monotone increasing functions.
Dividing by $\mu$ and letting $\mu \rightarrow 0$, we obtain 
\begin{align*}
\left.\frac{d}{d\mu}H((1-\mu)u+\mu\widetilde{u})\right|_{\mu=0}
& = \int_0^1 \frac{\partial\Phi}{\partial u}(x,u(x)) (\widetilde{u}(x)-u(x)) ~dx
\\&
= \int_0^1 \frac{\partial\Phi}{\partial u}(x,u(x)) \left( - \int_0^x
d(\lambda-\lambda([0,1]) \delta_m)(y) \right) ~dx
\\&
= \int_0^1 \left( \int_y^1 - \frac{\partial\Phi}{\partial u}(x,u(x)) ~dx \right)
~d(\lambda-\lambda([0,1]) \delta_m)(y)
\\&
= \int_0^1 F(y) ~d(\lambda-\lambda([0,1]) \delta_m)(y)
= \int_0^1 F(y) ~d\lambda(y)-\lambda([0,1]) F(m)\\&
= \int_0^1 F(y) - F(m)~d\lambda(y)\\&
= \int_{(\rm supp \lambda) \cap M} F(y) - F(m) ~d\lambda(y) + \int_{(\rm supp \lambda) \setminus  M} F(y) - F(m) ~d\lambda(y)\\&
= \int_{(\rm supp \lambda) \setminus  M} F(y) - F(m) ~d\lambda(y)\, .
\end{align*}
Since $m$ is a (global) maximizer of $F$, we have $F(x)\leq F(m)$ for any $x\in[0,1]$ and for $x\notin M$ we have $F(x)<F(m)$. Moreover, due to the continuity of $F$, we have that $M$ is closed.
Hence, the subset of $M$, it follows that
\begin{gather*}
\left.\frac{d}{d\mu}H((1-\mu)u+\mu\widetilde{u})\right|_{\mu=0}<0\, ,
\end{gather*}
which yields that $u$ is not a minimizer of $H$. 
\end{proof}

\begin{proof}[\textbf{Proof of Proposition~\ref{JumpCondition2}}]
We argue by contraposition. To this end, we ``split'' the jump into two smaller
jumps and move one of the jumps a little to the left or right. If $x_0=0$ or
$x_0=1$, we can only move in one direction, thereby explaining the weaker
assertions of the proposition at the boundary.

Suppose $\Phi(x_0,u(x_0-))>\Phi(x_0,w)$ for some $w\in (u(x_0-),u(x_0+)]$. The other case
can be handled similarly.

Let $\varepsilon>0$ be small and define $\widetilde{u}_\varepsilon(x) \in BV^\ell([0,1])$ as
\begin{gather*}
\widetilde{u}_\varepsilon(x):=
\begin{cases}
u(x) & \text{ for } x\notin [x_0-\varepsilon,x_0]\, , \\
u(x)+w-u(x_0-) & \text{ for } x\in[x_0-\varepsilon,x_0]\, .
\end{cases}
\end{gather*}
We have
\begin{gather}\label{unoo}
\int_0^1 J_0^{\ast\ast}(u')~dx
=\int_0^1 J_0^{\ast\ast}(\widetilde{u}'_\varepsilon)
~dx
\end{gather}
and
\begin{gather}\label{duee}
\int_0^1 \Phi(x,\widetilde{u}_\varepsilon(x))-\Phi(x,u(x))
~dx
=\int_{x_0-\varepsilon}^{x_0}\Phi(x,u(x)+w-u(x_0-))-\Phi(x,u(x))
~dx\, .
\end{gather}
As $x\nearrow x_0$, we have $u(x)
\rightarrow u(x_0-)$ and thus $u(x)+w-u(x_0-)\rightarrow w$. Hence by continuity of $\Phi$, \eqref{unoo} and \eqref{duee}, we obtain that
\begin{align*}
\lim_{\varepsilon\rightarrow 0}
\frac{H(\widetilde{u}_\varepsilon)-H(u)}{\varepsilon} 
=& \lim_{\varepsilon\to 0} \frac1\varepsilon \int_{x_0-\epsilon}^{x_0}\Phi(x,u(x)+w-u(x_0-))-\Phi(x,u(x))
~dx \\
=&  \Phi(x_0,w)-\Phi(x_0,u(x_0-))<0\, .
\end{align*}
Hence $u$ is not a minimizer of $H$, which finishes the proof. 
\end{proof}

\begin{proof}[\textbf{Proof of Proposition~\ref{GradientBoundedBelow}}]

Let $u$ be a minimizer of $H$. Suppose by contradiction that $u$ satisfies  $\essinf_{x\in [0,1]}u'(x)=0$. Fix $m, n\in \N$ such that $m<n$ and set
\begin{align*}
A_n:=\left\{x\in [0,1]:u'(x)<\frac{1}{n}\right\}
\end{align*}
and
\begin{align*}
C_m:=\left\{x\in [0,1]:u'(x)>\frac{1}{m}\right\}
 .
\end{align*}
As $\essinf_{x\in [0,1]}u'(x)=0$, for every $n \in \mathbb{N}$ one has that $|A_n| > 0$. Moreover
\begin{equation}\label{convzero}
\lim_{n\rightarrow +\infty} |A_n| = 0\, ,
\end{equation}
as otherwise $H(u) = +\infty$. For the same reason
there exists $m_0>0$ such that for all $m>m_0$ we have $|C_m|>0$.
Next we define a regularization $\widetilde{u}$ of $u$, which 
increases the derivative of $u$ to $1/m$ where it becomes too small and decreases the
derivative on $C_m$ in order to meet the boundary condition at $1$. For $x\in[0,1]$ we set
\begin{align*}
\widetilde{u}(x):=
\int_{[0,x]\setminus A_n} & u'(y) ~dy
+D^s u([0,x])
+\int_{[0,x] \cap A_n} \frac{1}{m}\, dy
+\frac{|C_m\cap[0,x]|}{|C_m|}\int_{ A_n} \left( u'(y)-\frac{1}{m}\right)\, dy\,.
\end{align*}
It turns out that $\widetilde  u \in BV^\ell([0,1])$.
Notice in addition that 
\begin{equation}\label{fuori}
\widetilde{u}'(x) = u'(x) \quad \mbox{for a.e.   } x\in [0,1] \setminus (C_m \cup A_n)\, .
\end{equation}
By the above definition of $\widetilde u$ and Equation \eqref{fundthm}, we obtain for any $x\in [0,1]$
\begin{align*}
|\widetilde u(x)-u(x)| \leq \int_{[0,x]\cap A_n} \Big|\frac{1}{m}-u'(y) \Big| \,dy + \int_{A_n} \Big|\frac{1}{m}-u'(y)\Big| dy
\frac{|C_m\cap[0,x]|}{|C_m|}\,.
\end{align*}
By the definition of $A_n$ and the fact that $u' \geq 0$, the following estimate holds:
\begin{gather}\label{difference}
\sup_{x\in [0,1]}|u(x)-\widetilde{u}(x)|\leq \frac{2|A_n|}{m}\, .
\end{gather}
Moreover, by [H1] and [H3] we know that $J_0^{\ast\ast}$ is Lipschitz continuous on
$[\frac{1}{2m},+\infty)$ for every $m>0$. Thus, using \eqref{convzero}, for any $m$ large enough there exists $n_0(m)$
such that for all $n\geq n_0$ for a.e. $x\in C_m$ the inequality
\begin{align*}
|J_0^{\ast\ast}(\widetilde{u}'(x)) - J_0^{\ast\ast}(u'(x))|
\leq C(m) \frac{|A_n|}{m}
\end{align*}
holds.

This yields for any $n$ large enough, using [$\Phi$1] as well as Equations \eqref{fuori} and \eqref{difference},
\begin{align*}
&H(\widetilde{u}) - H(u) \\
&= \int_{A_n} J_0^{\ast\ast}\left(\frac{1}{m}\right)
-J_0^{\ast\ast}(u') ~dx
+ \int_{C_m} J_0^{\ast\ast}\left(\widetilde{u}'\right)
-J_0^{\ast\ast}(u') ~dx
+\int_0^1 \Phi(x,\widetilde{u}(x))-\Phi(x,u(x)) ~dx
\\
& \leq
\int_{A_n}
J_0^{\ast\ast}\left(\frac{1}{m}\right)
-J_0^{\ast\ast}\left(\frac{1}{n}\right) ~dx
+\int_{C_m} C(m) \frac{|A_n|}{m} ~dx
+\frac{C|A_n|}{m}
\\
& \leq
|A_n| \left(J_0^{\ast\ast}\left(\frac{1}{m}\right)
-J_0^{\ast\ast}\left(\frac{1}{n}\right)+\frac{C(m)}{m}+\frac{C}{m}\right)
.
\end{align*}
Selecting $n$ large enough we see that, thanks to hypothesis [H5], the right hand side becomes negative. Hence we have reached a contradiction because $u$ minimizes $H$.
\end{proof}

\begin{proof}[\textbf{Proof of Proposition~\ref{EulerLagrange}}]

Let $u$ be a minimizer of $H$. For every test function $\phi\in C^\infty_0([0,1])$ consider $u+\lambda \phi$ for $\lambda \in \R$. Thanks to the minimality of $u$ we have
\begin{equation}\label{rot}
\frac{H(u+\lambda \phi) - H(u)}{\lambda} \geq 0\, .
\end{equation}
By Proposition~\ref{GradientBoundedBelow}, for $\lambda$ small enough, $ u'(x) + \lambda  \phi'(x)  > 0$ for almost every $x\in [0,1]$. Therefore, letting $\lambda \rightarrow 0$ in \eqref{rot} and using assumptions [$\Phi 1$], [H1] and [H3] to differentiate under the integral sign, we obtain that
\begin{align*}
\int_0^1 {J_0^{\ast\ast}}'(u')\phi'+\frac{\partial\Phi}{\partial
u}(x,u)\phi ~dx\geq 0\, .
\end{align*}
Then, replacing $\phi$ with $-\phi$, we infer the opposite inequality.

To prove the regularity of $u'$ in $M_\gamma$, defined in \eqref{SetSmallerGamma}, we notice that $u$ is bounded as it is a one-dimensional BV function. Hence, 
 $\frac{\partial\Phi}{\partial u}(x,u)$ is bounded as well. The Euler-Lagrange equation \eqref{EulerLagrangeEq} therefore entails that ${J_0^{\ast\ast}}'(u')$ is Lipschitz.  Notice that the preimage of $\{0\}$ of the continuous map ${J_0^{\ast\ast}}'(u')$ equals $\{ x \in [0,1] : u'(x) \geq \gamma\}$ by the strict monotonicity of ${J_0^{\ast\ast}}'(z)$ for $z\leq\gamma$ (assumption [H0]) and the fact that ${J_0^{\ast\ast}}'(z)$ is constant for $z\geq \gamma$. Since $\{0\}$ is closed, $\{ x \in [0,1] : u'(x) \geq \gamma\}$ is closed. Hence $M_\gamma$ is open.

 By the continuity of ${J_0^{\ast\ast}}'(u')$, the strict monotonicity of ${J_0^{\ast\ast}}'$ also implies the continuity of $u'$ on $M_\gamma$. 
\end{proof}

\section{Proofs for Section~\ref{seccomp}}\label{proofcomp}

In what follows we use extensively the following quantity:
\begin{gather} \label{Rdef}
R(z_1,z_2):=\frac{1}{2}\left[J_1(z_1)+J_1(z_2)\right]
+J_2\left(\frac{z_1+z_2}{2}\right)-J_0\left(\frac{z_1+z_2}{2}\right).
\end{gather}

First of all we propose a technical lemma that shows that under the assumption of convexity of $J_0$ and $J_1$ (see hypothesis [H0]), the functional $R(z_1,z_2)$ is bounded from below quadratically.
We will employ this estimate in the proof of part (b) of Theorem~\ref{Compactness1}.
\begin{lemma}\label{Jonecomp}
Let [H0]--[H5] be satisfied. Then
\begin{align}
R(z_1,z_2)&\geq 
c\left|z_1-z_2\right|^2
 \mbox{ for all } 0< z_1,z_2<\gamma^c\, . \label{Resti1}
\end{align}
\end{lemma}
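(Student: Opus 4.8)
The plan is to reduce the statement to a pure statement about the strictly convex function $J_1$ on the interval $(0,\gamma^c)$, using the definition of $J_0$ as an inf-convolution. Recall that by \eqref{J0def},
\begin{align*}
J_0\left(\frac{z_1+z_2}{2}\right)=\inf\left\{J_2\left(\tfrac{z_1+z_2}{2}\right)+\tfrac12\left(J_1(w_1)+J_1(w_2)\right):\tfrac12(w_1+w_2)=\tfrac{z_1+z_2}{2}\right\}.
\end{align*}
Since the $J_2$ term is the same in $R(z_1,z_2)$ and in the defining infimum for $J_0$, we get
\begin{align*}
R(z_1,z_2)=\tfrac12\left[J_1(z_1)+J_1(z_2)\right]-\inf_{\frac12(w_1+w_2)=\frac{z_1+z_2}{2}}\tfrac12\left[J_1(w_1)+J_1(w_2)\right],
\end{align*}
i.e. $R(z_1,z_2)$ measures exactly how far the pair $(z_1,z_2)$ is from being the optimal split of its own midpoint $z:=\tfrac{z_1+z_2}{2}$ in the energy $\tfrac12(J_1(w_1)+J_1(w_2))$ subject to $\tfrac12(w_1+w_2)=z$. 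By [H2], for $z\in(0,\gamma^c)$ that optimal split is the symmetric one $w_1=w_2=z$, so $R(z_1,z_2)=\tfrac12[J_1(z_1)+J_1(z_2)]-J_1\!\left(\tfrac{z_1+z_2}{2}\right)$, the Jensen gap of $J_1$.

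First I would establish this identity carefully, noting that $z=\tfrac{z_1+z_2}{2}\in(0,\gamma^c)$ whenever $0<z_1,z_2<\gamma^c$, so [H2] applies and pins down the minimizer. Then the claim \eqref{Resti1} becomes: there exists $c>0$ with
\begin{align*}
\tfrac12\left[J_1(z_1)+J_1(z_2)\right]-J_1\!\left(\tfrac{z_1+z_2}{2}\right)\geq c\,|z_1-z_2|^2\qquad\text{for all }0<z_1,z_2<\gamma^c.
\end{align*}
This is a standard quantitative strong-convexity estimate. Since $J_1\in C^2(0,\infty)$ by [H1] and $J_1$ is strictly convex on $(0,\gamma^c+c_0)$ by [H0] (I will rename the constant from [H0] to avoid clashing with the $c$ in the statement), on any compact subinterval $[\delta,\gamma^c-\delta]$ one has $J_1''\geq\mu_\delta>0$. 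The subtlety is uniformity up to the endpoints $0$ and $\gamma^c$: near $0$, $J_1''\to+\infty$ by [H5]-type blow-up (or at least stays bounded below), and near $\gamma^c$ we use that $\gamma^c+c_0>\gamma^c$ so $[\,\cdot\,,\gamma^c]$ is still in the open interval of strict convexity, hence $J_1''$ is continuous and positive on the compact set $[\varepsilon_0,\gamma^c]$ for any $\varepsilon_0>0$, giving a positive lower bound there; and on $(0,\varepsilon_0]$ one handles the blow-up of $J_1''$ directly. Taking $c:=\tfrac18\inf_{(0,\gamma^c)}J_1''>0$ and using Taylor expansion of $J_1$ around the midpoint $z$ (with the $|z_1-z|=|z-z_2|=\tfrac12|z_1-z_2|$ bookkeeping) yields the quadratic lower bound.

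The main obstacle I anticipate is precisely the uniformity of the convexity constant across the whole open interval $(0,\gamma^c)$ — in particular making sure $\inf_{(0,\gamma^c)}J_1''>0$ rather than merely positive pointwise. The argument near $\gamma^c$ is clean because of the slack $c_0$ in [H0], and near $0$ one can exploit that $J_1''$ does not degenerate (indeed tends to $+\infty$ for Lennard-Jones-type potentials); more abstractly, if one only knows strict convexity on the open interval one argues by a compactness/continuity argument on $[\varepsilon_0,\gamma^c]$ plus a separate elementary estimate on $(0,\varepsilon_0]$ where $J_1$ is convex and decreasing. Once the uniform bound $J_1''\geq 8c>0$ on $(0,\gamma^c)$ is in hand, the Taylor/Jensen computation is routine and I would not spell it out in full.
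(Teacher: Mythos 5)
Your argument is correct and is essentially the paper's own proof in different clothing: the paper likewise combines [H2] (which gives $R(a,a)=0$, i.e.\ $J_0(z)=J_1(z)+J_2(z)$ for $z<\gamma^c$, so that $R$ is exactly the Jensen gap of $J_1$ that you isolate) with a uniform convexity bound for $J_1$ on $(0,\gamma^c)$ from [H0], phrased there as convexity of $b\mapsto R(a+b,a-b)-C|b|^2$ together with $R\geq 0$ and $R(a,a)=0$. The uniformity of the convexity modulus up to the endpoints, which you flag as the main obstacle, is passed over silently in the paper, so your treatment is if anything slightly more explicit on that point.
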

\begin{proof}
By strict convexity of $J_1$ on
$(0,\gamma^c+c)$ (see assumption [H0]), the function $R(a+b,a-b)-C |b|^2$ is
still convex in $b$ for $C>0$ small enough as long as $b$ is such that $a+b,a-b<\gamma^c$. 
By the definition of $J_0$ we know that $R(a+b,a-b) \geq 0$. Moreover thanks to hypothesis [H2], $R(a,a) = 0$  whenever $a < \gamma^c$. Hence
\begin{displaymath}
R(a+b,a-b) = R(a+b,a-b) - R(a,a) \geq C|b|^2
\end{displaymath}
for $a+b,a-b<\gamma^c$. Hence \eqref{Resti1} follows.
\end{proof}
\ae

We are in position to prove the main theorem. In the next sections we  prove separately part (a),  (b) and (c) of Theorem~\ref{Compactness1}.

\subsection{Proof of part (a) of Theorem~\ref{Compactness1}}\label{a}
\begin{proof}

Let $(u_n) \subset \mathcal{A}^\ell_n(0,1)$ be a sequence of configurations such that $\sup_n H_{1,n}^\ell(u_n) < +\infty$. The first step of the proof of the compactness result for the $\Gamma$-limit of first order is the novel construction of suitable competitors for $H$ that allow to obtain the estimates in \eqref{expl} and \eqref{eq17-?}. In particular the goal is to define competitors $v_{1,n}$ and $v_{2,n}$ in such a way that the difference $\frac{1}{\lambda_n}\left(H_{n}^\ell(u_n)-\frac{1}{2}(H(v_{1,n})+H(v_{2,n}))\right)$ provides control of the quantities in \eqref{expl} and \eqref{eq17-?}. Observe in addition that this difference is controlled from above by $H_{1,n}^\ell(u_n)$.  We remark that the competitors $v_{1,n}$ and $v_{2,n}$ are not continuous and hence do not belong to $\mathcal{A}^\ell_n(0,1)$; however, they are admissible functions for $H$.

We will divide the proof in two steps: the first one is devoted to the construction of the competitors and the second one to the compactness estimates.
\vspace*{2mm}\\
\textbf{Step 1. Construction of the competitors $v_{1,n}$ and $v_{2,n}$}
\vspace*{2mm}\\
Define first ${\widetilde v}_{1,n}:[0,1]\rightarrow \mathbb{R}$ on the even atoms (and on the $n$-th one) in the following way:
\begin{displaymath}
\begin{array}{ll}
\widetilde v_{1,n}(2k\lambda_n):=u_n(2k\lambda_n) & \mbox{for } k\in \N_0, \  k\leq \frac{n}{2}\, ,\\
\widetilde v_{1,n}(n\lambda_n):=u_n(n\lambda_n)\, ,
\end{array}
\end{displaymath}
and then by piecewise affine interpolation for
all other values of $x\in [0,1]$. Define $\widetilde v_{2,n}$ similarly, but prescribing
the values of $\widetilde v_{2,n}$ at $0$, $(2k+1)\lambda_n$ and $n\lambda_n$
instead, for $k\leq \tfrac{n-1}2$.

As usual in this context, the general idea is to use the quantity
\begin{align} 
\nonumber
\MoveEqLeft{ \frac{1}{2}J_1\left(\frac{u_n^{i+1}-u_n^i}{\lambda_n}\right)
+\frac{1}{2}J_1\left(\frac{u_n^{i+2}-u_n^{i+1}}{\lambda_n}\right)
+J_2\left(\frac{u_n^{i+2}-u_n^{i}}{2\lambda_n}\right) }
\\&
=
J_0\left(\frac{u_n^{i+2}-u_n^{i}}{2\lambda_n}\right)
+R\left(\frac{u_n^{i+1}-u_n^{i}}{\lambda_n},\frac{u_n^{i+2}-u_n^{i+1}}{\lambda_n}\right) \label{utilize}
\end{align}
(see \eqref{Rdef} for the definition of $R$)
in order to compare $H_n(u_n)$ with
$\frac{1}{2}H(\widetilde v_{1,n})+\frac{1}{2}H(\widetilde v_{2,n})$. However, due to the
presence of a force term $\Phi$ this does not work directly, as the affine
interpolation involved in the definition of $\widetilde v_{i,n}$ may lead to a
large difference between $u_n$ and $\widetilde v_{i,n}$ at some points (see Fig.~\ref{EvenOddInterpolation}). We thus need to modify the functions $\widetilde v_{i,n}$ accordingly.

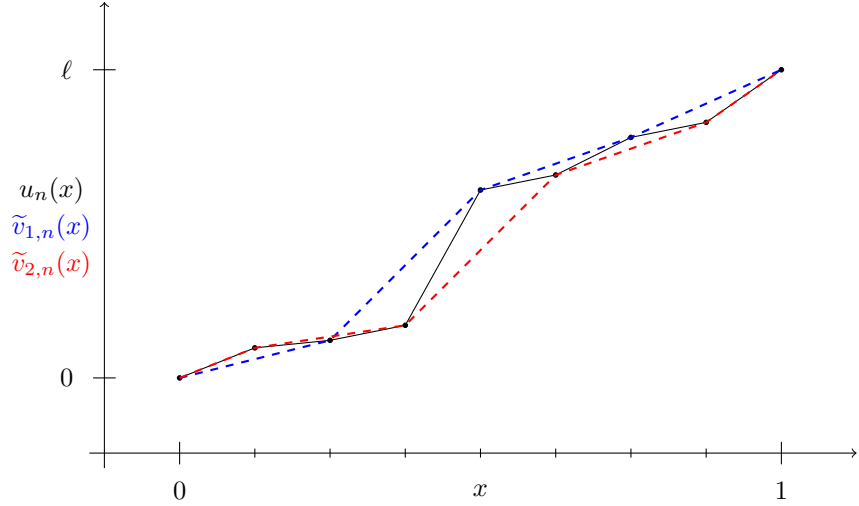
\begin{figure}
\begin{center}
\begin{tikzpicture}
\draw[->] (-1.2,0) -- (9,0);
\draw[->] (-1,-0.2) -- (-1,6);

\draw (0,-0.15) -- (0,0.15);

\draw (1,-0.06) -- (1,0.06);
\draw (2,-0.06) -- (2,0.06);
\draw (3,-0.06) -- (3,0.06);
\draw (4,-0.06) -- (4,0.06);
\draw (5,-0.06) -- (5,0.06);
\draw (6,-0.06) -- (6,0.06);
\draw (7,-0.06) -- (7,0.06);
\draw (8,-0.15) -- (8,0.15);

\draw (-1.15,1) -- (-0.85,1);
\draw (-1.15,5.1) -- (-0.85,5.1);

\node at (0,-0.5) {$0$};
\node at (8,-0.5) {$1$};
\node at (-1.5,1) {$0$};
\node at (-1.5,5.1) {$\ell$};
\node at (4,-0.5) {$x$};
\node at (-1.7,3.5) {$u_n(x)$};
\node[color=blue] at (-1.7,3.0) {$\widetilde{v}_{1,n}(x)$};
\node[color=red] at (-1.7,2.5) {$\widetilde{v}_{2,n}(x)$};

\draw (0,1) -- (1,1.4) -- (2,1.5) -- (3,1.7) -- (4,3.5)
-- (5,3.7) -- (6,4.2) -- (7,4.4) -- (8,5.1);

\draw[fill] (0,1) circle (0.03cm);
\draw[fill] (1,1.4) circle (0.03cm);
\draw[fill] (2,1.5) circle (0.03cm);
\draw[fill] (3,1.7) circle (0.03cm);
\draw[fill] (4,3.5) circle (0.03cm);
\draw[fill] (5,3.7) circle (0.03cm);
\draw[fill] (6,4.2) circle (0.03cm);
\draw[fill] (7,4.4) circle (0.03cm);
\draw[fill] (8,5.1) circle (0.03cm);

\draw[dashed,blue,thick] (0,1) -- (2,1.5) -- (4,3.5) -- (6,4.2) -- (8,5.1);
\draw[dashed,red,thick] (0,1) -- (1,1.4) -- (3,1.7) -- (5,3.7) -- (7,4.4) --
(8,5.1);

\end{tikzpicture}
\end{center}

\caption{A configuration of the discrete chain (black) and the
corresponding naive even-odd interpolators ${\widetilde v}_{1,n}$, ${\widetilde
v}_{2,n}$ (dashed lines, blue, red)
\label{EvenOddInterpolation}
}
\end{figure}

We define $\overline{v}_{1,n}$ to be equal to $\widetilde v_{1,n}$ everywhere, with the following
exception: if the slope of $\widetilde v_{1,n}$ exceeds $\gamma$ on an interval
$(2k\lambda_n,(2k+2)\lambda_n)$, where $\gamma$ is as in [H4], we prescribe $\overline{v}_{1,n}$ on the atoms $2k\lambda_n$, $(2k+1)\lambda_n$ and $(2k + 2) \lambda_n$ as 
\begin{align*}
\overline{v}_{1,n}(2k\lambda_n)& :=\widetilde v_{1,n}(2k\lambda_n)\, ,\\
\overline{v}_{1,n}((2k+1)\lambda_n)&:=u_n((2k+1)\lambda_n)\, , \\ 
\overline{v}_{1,n}((2k+2)\lambda_n)&:=\widetilde v_{1,n}((2k+2)\lambda_n)\, .
\end{align*} 
Then, we extend it to $(2k\lambda_n,(2k+2)\lambda_n)$ not by affine interpolation but we instead impose $\overline{v}_{1,n}$ to
have slope $\gamma$ almost everywhere on the interval $(2k\lambda_n,(2k+2)\lambda_n)$. In this way we are forced to introduce jumps in some intermediate points of the interval $(2k\lambda_n,(2k+2)\lambda_n)$; in particular we allow for jumps in $(2k+\frac{1}{2})\lambda_n$ and $(2k+\frac{3}{2})\lambda_n$ (cf. Fig.~\ref{AdjustedInterpolation}).

Finally, we possibly perform a translation to $\overline{v}_{1,n}$ obtaining as a final outcome the function $v_{1,n}$:
if the jumps at $(2k+\frac{1}{2})\lambda_n$ or at $(2k+\frac{3}{2})\lambda_n$ are negative,
we add a constant to $\overline{v}_{1,n}$ on the interval
$((2k+\frac{1}{2})\lambda_n,(2k+\frac{3}{2})\lambda_n)$ in such a way that the
negative jump is eliminated and the function becomes continuous at that point
again (cf.\ Fig.~\ref{EvenOddInterpolation-3} and \ref{EvenOddInterpolation-4}). This does not cause another negative jump to appear on the other end of
the interval, since we are in the case of the slope of $\overline{v}_{1,n}$ being larger than $\gamma$ on
$(2k\lambda_n,(2k+2)\lambda_n)$.

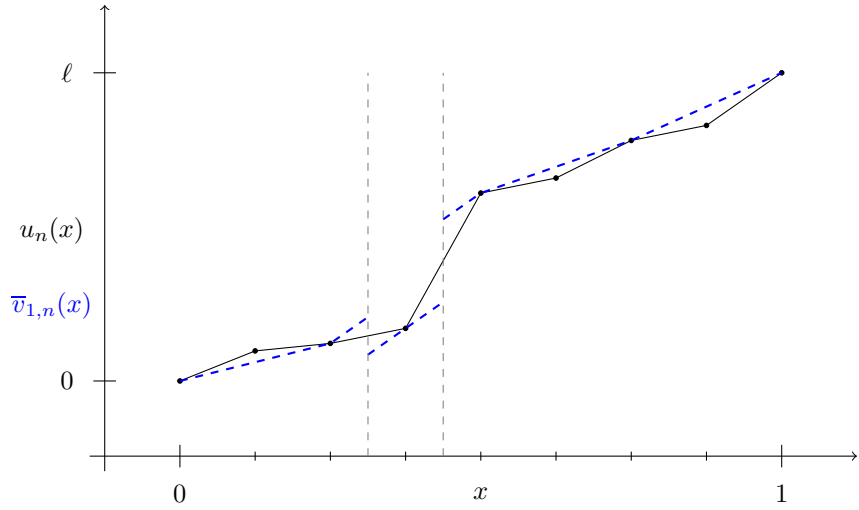
\begin{figure}
\begin{center}
\begin{tikzpicture}
\draw[->] (-1.2,0) -- (9,0);
\draw[->] (-1,-0.2) -- (-1,6);

\draw (0,-0.15) -- (0,0.15);
\draw (8,-0.15) -- (8,0.15);

\draw (-1.15,1) -- (-0.85,1);
\draw (-1.15,5.1) -- (-0.85,5.1);

\draw (1,-0.06) -- (1,0.06);
\draw (2,-0.06) -- (2,0.06);
\draw (3,-0.06) -- (3,0.06);
\draw (4,-0.06) -- (4,0.06);
\draw (5,-0.06) -- (5,0.06);
\draw (6,-0.06) -- (6,0.06);
\draw (7,-0.06) -- (7,0.06);

\node at (0,-0.5) {$0$};
\node at (8,-0.5) {$1$};
\node at (-1.5,1) {$0$};
\node at (-1.5,5.1) {$\ell$};
\node at (4,-0.5) {$x$};
\node at (-1.7,3.0) {$u_n(x)$};

\draw (0,1) -- (1,1.4) -- (2,1.5) -- (3,1.7) -- (4,3.5)
-- (5,3.7) -- (6,4.2) -- (7,4.4) -- (8,5.1);

\draw[fill] (0,1) circle (0.03cm);
\draw[fill] (1,1.4) circle (0.03cm);
\draw[fill] (2,1.5) circle (0.03cm);
\draw[fill] (3,1.7) circle (0.03cm);
\draw[fill] (4,3.5) circle (0.03cm);
\draw[fill] (5,3.7) circle (0.03cm);
\draw[fill] (6,4.2) circle (0.03cm);
\draw[fill] (7,4.4) circle (0.03cm);
\draw[fill] (8,5.1) circle (0.03cm);

\node[color=blue] at (-1.7,2.0) {$\overline{v}_{1,n}(x)$};

\draw[dashed,blue,thick] (0,1) -- (2,1.5) -- (2.5,1.85);
\draw[dashed,blue,thick] (2.5,1.35) -- (3.5,2.05);
\draw[dashed,blue,thick] (3.5,3.15) -- (4,3.5) -- (6,4.2) -- (8,5.1);

\draw[dashed,color=gray] (2.5,0) -- (2.5,5.1);
\draw[dashed,color=gray] (3.5,0) -- (3.5,5.1);
\end{tikzpicture}
\end{center}

\caption{A configuration of the discrete chain (black) and the
corresponding adjusted even-odd interpolating function $\overline{v}_{1,n}$ before the
final translation step (dashed line, bl)
\label{AdjustedInterpolation}}
\end{figure}

\begin{figure}
\begin{center}
\begin{tikzpicture}
\draw[->] (-1.2,0) -- (9,0);
\draw[->] (-1,-0.2) -- (-1,6);

\draw (0,-0.15) -- (0,0.15);
\draw (8,-0.15) -- (8,0.15);

\draw (-1.15,1) -- (-0.85,1);
\draw (-1.15,5.1) -- (-0.85,5.1);

\draw (1,-0.06) -- (1,0.06);
\draw (2,-0.06) -- (2,0.06);
\draw (3,-0.06) -- (3,0.06);
\draw (4,-0.06) -- (4,0.06);
\draw (5,-0.06) -- (5,0.06);
\draw (6,-0.06) -- (6,0.06);
\draw (7,-0.06) -- (7,0.06);

\node at (0,-0.5) {$0$};
\node at (8,-0.5) {$1$};
\node at (-1.5,1) {$0$};
\node at (-1.5,5.1) {$\ell$};
\node at (4,-0.5) {$x$};
\node at (-1.7,3.25) {$u_n(x)$};
\node[color=blue] at (-1.7,2.75) {$v_{1,n}(x)$};

\draw (0,1) -- (1,1.4) -- (2,1.5) -- (3,1.7) -- (4,3.5)
-- (5,3.7) -- (6,4.2) -- (7,4.4) -- (8,5.1);

\draw[fill] (0,1) circle (0.03cm);
\draw[fill] (1,1.4) circle (0.03cm);
\draw[fill] (2,1.5) circle (0.03cm);
\draw[fill] (3,1.7) circle (0.03cm);
\draw[fill] (4,3.5) circle (0.03cm);
\draw[fill] (5,3.7) circle (0.03cm);
\draw[fill] (6,4.2) circle (0.03cm);
\draw[fill] (7,4.4) circle (0.03cm);
\draw[fill] (8,5.1) circle (0.03cm);

\draw[dashed,blue,thick] (0,1) -- (2,1.5) -- (2.5,1.85);
\draw[dashed,blue,thick] (2.5,1.85) -- (3.5,2.55);
\draw[dashed,blue,thick] (3.5,3.15) -- (4,3.5) -- (6,4.2) -- (8,5.1);

\draw[dashed,color=gray] (2.5,0) -- (2.5,5.1);
\draw[dashed,color=gray] (3.5,0) -- (3.5,5.1);
\draw[dashed,color=gray] (0,1.85) -- (8,1.85);
\draw[dashed,color=gray] (0,2.55) -- (8,2.55);
\draw[dashed,color=gray] (0,3.15) -- (8,3.15);
\end{tikzpicture}
\end{center}

\caption{A configuration of the discrete chain (black) and the
corresponding adjusted even-odd interpolating function $v_{1,n}$ after the final
translation step (dashed line, blue); the left discontinuity has been fixed
during the translation step
\label{EvenOddInterpolation-3}}
\end{figure}
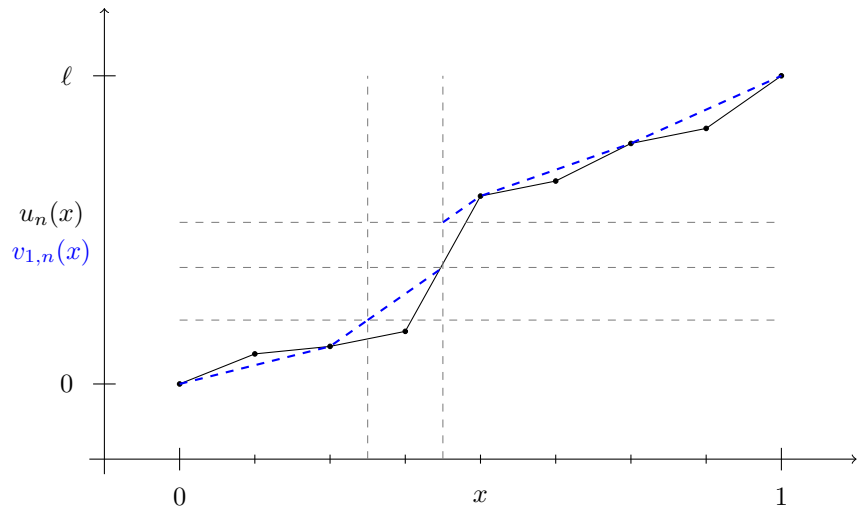

\begin{figure}
\begin{center}
\begin{tikzpicture}
\draw[->] (-1.2,0) -- (9,0);
\draw[->] (-1,-0.2) -- (-1,6);

\draw (0,-0.15) -- (0,0.15);
\draw (8,-0.15) -- (8,0.15);

\draw (-1.15,1) -- (-0.85,1);
\draw (-1.15,5.1) -- (-0.85,5.1);

\draw (1,-0.06) -- (1,0.06);
\draw (2,-0.06) -- (2,0.06);
\draw (3,-0.06) -- (3,0.06);
\draw (4,-0.06) -- (4,0.06);
\draw (5,-0.06) -- (5,0.06);
\draw (6,-0.06) -- (6,0.06);
\draw (7,-0.06) -- (7,0.06);

\node at (0,-0.5) {$0$};
\node at (8,-0.5) {$1$};
\node at (-1.5,1) {$0$};
\node at (-1.5,5.1) {$\ell$};
\node at (4,-0.5) {$x$};
\node at (-1.7,3.25) {$u_n(x)$};
\node[color=blue] at (-1.7,2.75) {$v_{1,n}(x)$};

\draw (0,1) -- (1,1.4) -- (2,1.5) -- (3,2.4) -- (4,3.5)
-- (5,3.7) -- (6,4.2) -- (7,4.4) -- (8,5.1);

\draw[fill] (0,1) circle (0.03cm);
\draw[fill] (1,1.4) circle (0.03cm);
\draw[fill] (2,1.5) circle (0.03cm);
\draw[fill] (3,2.4) circle (0.03cm);
\draw[fill] (4,3.5) circle (0.03cm);
\draw[fill] (5,3.7) circle (0.03cm);
\draw[fill] (6,4.2) circle (0.03cm);
\draw[fill] (7,4.4) circle (0.03cm);
\draw[fill] (8,5.1) circle (0.03cm);

\draw[dashed,blue,thick] (0,1) -- (2,1.5) -- (2.5,1.85);
\draw[dashed,blue,thick] (2.5,2.05) -- (3.5,2.75);
\draw[dashed,blue,thick] (3.5,3.15) -- (4,3.5) -- (6,4.2) -- (8,5.1);

\draw[dashed,color=gray] (2.5,0) -- (2.5,5.1);
\draw[dashed,color=gray] (3.5,0) -- (3.5,5.1);
\draw[dashed,color=gray] (0,1.85) -- (8,1.85);
\draw[dashed,color=gray] (0,2.05) -- (8,2.05);
\draw[dashed,color=gray] (0,2.75) -- (8,2.75);
\draw[dashed,color=gray] (0,3.15) -- (8,3.15);
\end{tikzpicture}
\end{center}

\caption{A slightly different configuration of the discrete chain
(black) and the corresponding adjusted even-odd interpolating function
$v_{1,n}$ (dashed line, blue); here no translation was required as no
``backward'' jump occurred as a result of the modification of $\widetilde v_{1,n}$
and consequently $v_{1,n}$ remains discontinuous at two points.
\label{EvenOddInterpolation-4}}
\end{figure}
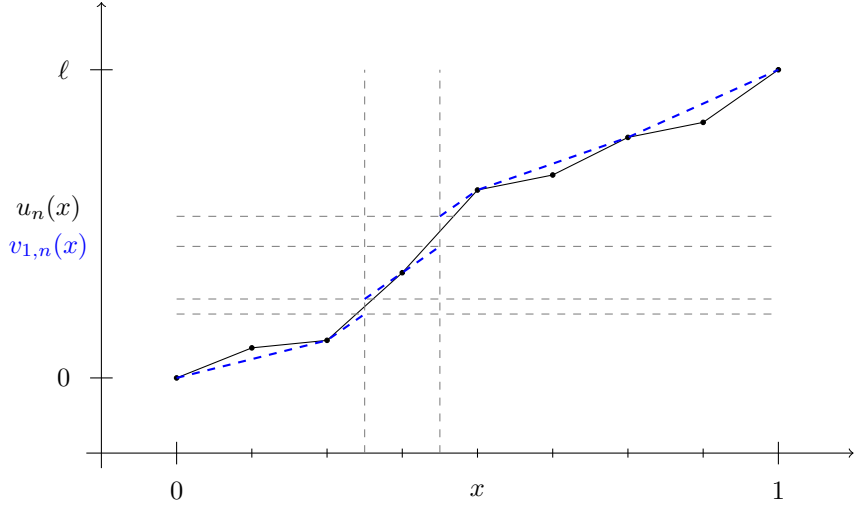

The definition of $v_{2,n}$ is performed using the obvious modifications: on the intervals $((2k-1)\lambda_n, (2k+1) \lambda_n)$ one applies the same procedure to $\widetilde v_{2,n}$ used to construct $v_{1,n}$ from $\widetilde v_{1,n}$.
\vspace*{2mm} \\

\textbf{Step 2. Compactness estimates}
\vspace*{2mm}\\
Now we are in a position to prove the statements of part~(a) of Theorem~\ref{Compactness1}. By \eqref{utilize} and the definition of $\widetilde v_{1,n}$ above, the following equality holds for any even $0 \leq i \leq n-2$:
\begin{align}
\nonumber
\MoveEqLeft{ \frac{1}{2} J_1\left(\frac{u_n^{i+1}-u_n^{i}}{\lambda_n}\right)
+\frac{1}{2} J_1\left(\frac{u_n^{i+2}-u_n^{i+1}}{\lambda_n}\right)
+J_2\left(\frac{u_n^{i+2}-u_n^{i}}{2\lambda_n}\right) }
\nonumber
\\
=&
\frac{1}{2\lambda_n}\int_{i\lambda_n}^{(i+2)\lambda_n} J_0({\widetilde v}'_{1,n}) ~dx
+ R\left(\frac{u_n^{i+1}-u_n^{i}}{\lambda_n},
\frac{u_n^{i+2}-u_n^{i+1}}{\lambda_n}\right).
\end{align}
An analogous equality holds for $i$ odd and $\widetilde v_{2,n}$.
 Taking the sum with respect to $i$ ranging from $0$ to $n-2$ and multiplying by $\lambda_n$, we therefore obtain
\begin{align}
\nonumber
\MoveEqLeft{ \sum_{i=0}^{n-1}\lambda_n
J_1\left(\frac{u_n^{i+1}-u_n^{i}}{\lambda_n}\right)
+\sum_{i=0}^{n-2}\lambda_n
J_2\left(\frac{u_n^{i+2}-u_n^{i}}{2\lambda_n}\right) }
\\
\label{Inequ0}
=&
\frac{1}{2}\int_0^1 J_0({\widetilde v}'_{1,n}) ~dx
+\frac{1}{2}\int_0^1 J_0({\widetilde v}'_{2,n}) ~dx
+\sum_{i=0}^{n-2}\lambda_n
R\left(\frac{u_n^{i+2}-u_n^{i+1}}{\lambda_n},
\frac{u_n^{i+1}-u_n^{i}}{\lambda_n}\right)
\\&
\nonumber
+\frac{\lambda_n}{2} \left[J_1\left(\frac{u_n^1-u_n^0}{\lambda_n}\right)
+J_1\left(\frac{u_n^n-u_n^{n-1}}{\lambda_n}\right)
-J_0\left(\frac{u_n^1-u_n^0}{\lambda_n}\right)
-J_0\left(\frac{u_n^n-u_n^{n-1}}{\lambda_n}\right)\right],
\end{align}
where the last line contains the corrections for the segments at the end of the
chain. 
Moreover as the slope of $u_n$ at
the boundary is prescribed (see assumption $[B1]$), the terms in the last line can be estimated by
$C\lambda_n$. This yields
\begin{align}
\nonumber
\MoveEqLeft{ \sum_{i=0}^{n-1}\lambda_n
J_1\left(\frac{u_n^{i+1}-u_n^{i}}{\lambda_n}\right)
+\sum_{i=0}^{n-2}\lambda_n
J_2\left(\frac{u_n^{i+2}-u_n^{i}}{2\lambda_n}\right) }
\\
\label{Inequ1}
\geq&
\frac{1}{2}\int_0^1 J_0({\widetilde v}'_{1,n}) ~dx
+\frac{1}{2}\int_0^1 J_0({\widetilde v}'_{2,n}) ~dx
+\sum_{i=0}^{n-2}\lambda_n
R\left(\frac{u_n^{i+2}-u_n^{i+1}}{\lambda_n},
\frac{u_n^{i+1}-u_n^{i}}{\lambda_n}\right)
-C\lambda_n
\,.
\end{align}
Notice that for $1 \leq i \leq n-1$ even we have that $u_n^i=v_{1,n}(i\lambda_n)$ and the function $v_{1,n}$ is Lipschitz with slope less or equal than $\gamma$ in $((i-\frac{1}{2})\lambda_n,(i + \frac{1}{2})\lambda_n)$. Therefore by Lipschitz continuity of $\Phi$ (see assumption [$\Phi$1]), we infer
\begin{align}
\frac{1}{\lambda_n}\int_{(i - \frac{1}{2})\lambda_n}^{(i + \frac{1}{2})\lambda_n}&
\Phi(x,v_{1,n}(x)) ~dx  -\Phi(i\lambda_n,u_n^i)\leq \frac{1}{\lambda_n} \int_{(i - \frac{1}{2})\lambda_n}^{(i + \frac{1}{2})\lambda_n}
\left|\Phi(x,v_{1,n}(x)) - \Phi(i\lambda_n,u_n^i)\right|\, dx \nonumber \\
& \leq C\lambda_n+ \frac{C}{\lambda_n}\int_{(i - \frac{1}{2})\lambda_n}^{(i + \frac{1}{2})\lambda_n} |v_{1,n}(x) - u_n^i| ~dx  \leq C\lambda_n\,. 
\label{ChainVSEvenOddForce1}
\end{align}

For $1 \leq i\leq n-1$ odd we can distinguish two cases. If $\frac{u_n^{i+1}-u_n^{i-1}}{2\lambda_n}>\gamma$ we have, by the construction in the previous step that $|u_n^i-v_{1,n}(i\lambda_n)|\leq C \lambda_n$. On the other hand if $\frac{u_n^{i+1}-u_n^{i-1}}{2\lambda_n} \leq \gamma$ we have $v_{1,n}(i\lambda_n) = \widetilde{v}_{1,n}(i\lambda_n) = \frac{u_n^{i+1}+u_n^{i-1}}{2}$. In this case as well it is easy to check that $|u_n^i-v_{1,n}(i\lambda_n)|\leq C \lambda_n$. Therefore, thanks to assumption [$\Phi1$], we infer
\begin{align}
\nonumber & \frac{1}{\lambda_n}\int_{(i-\frac{1}{2})\lambda_n}^{(i+\frac{1}{2})\lambda_n}
\Phi(x,v_{1,n}(x)) ~dx-\Phi(i\lambda_n,u_n^i)\\
& \leq \frac{1}{\lambda_n}\int_{(i-\frac{1}{2})\lambda_n}^{(i+\frac{1}{2})\lambda_n}
\left|\Phi(x,v_{1,n}(x)) -\Phi(i\lambda_n,v_{1,n}(i\lambda_n))\right| ~dx \nonumber
+ \left|
\Phi(i\lambda_n,v_{1,n}(i\lambda_n) -\Phi(i\lambda_n,u_n^i)\right|\\
& \leq C\lambda_n\,.
\label{ChainVSEvenOddForce2}
\end{align}
Analogous estimates hold
for $v_{2,n}$. 

Then, multiplying inequalities  \eqref{ChainVSEvenOddForce1} and \eqref{ChainVSEvenOddForce2}  by $\lambda_n/2$ and summing from $1$ to $n-1$, we obtain
\begin{align}\label{forceine}
\nonumber \sum_{i=0}^n &\lambda_n \Phi(i\lambda_n,u_n^i) \\
 \geq &\frac{1}{2}\int_0^1 \Phi(x,v_{1,n})\, dx + \frac{1}{2}\int_0^1 \Phi(x,v_{2,n})\, dx +\lambda_n\Phi(0,0)+\lambda_n\Phi(1,\ell) \nonumber \\
& 
-\frac{1}{2}\int_0^{\frac{\lambda_n}{2}} \Phi(x,v_{1,n}) + \Phi(x,v_{2,n}) ~dx
-\frac{1}{2}\int_{1-\frac{\lambda_n}{2}}^1 \Phi(x,v_{1,n}) + \Phi(x,v_{2,n}) ~dx - C\lambda_n \nonumber\\
\geq &\frac{1}{2}\int_0^1 \Phi(x,v_{1,n})\, dx + \frac{1}{2}\int_0^1 \Phi(x,v_{2,n})\, dx - C\lambda_n\,,
\end{align}
where in the last inequality we used assumption $[\Phi1]$. 
Putting together estimates \eqref{forceine} and \eqref{Inequ1}, we infer
\begin{align*}
\MoveEqLeft{ \sum_{i=0}^{n-1}\lambda_n
J_1\left(\frac{u_n^{i+1}-u_n^{i}}{\lambda_n}\right)
+\sum_{i=0}^{n-2}\lambda_n
J_2\left(\frac{u_n^{i+2}-u_n^{i}}{2\lambda_n}\right)
+\sum_{i=0}^n \lambda_n \Phi(i\lambda_n,u_n^i) }
\\
\geq&
\frac{1}{2}\int_0^1 J_0^{**}(v'_{1,n}) +\Phi(x,v_{1,n}) ~dx
+\frac{1}{2}\int_0^1 J_0^{**}(v'_{2,n}) +\Phi(x,v_{2,n}) ~dx
\\&
+\frac{1}{2}\int_0^1 (J_0-J_0^{**})(\widetilde v'_{1,n}) ~dx
+\frac{1}{2}\int_0^1 (J_0-J_0^{**})(\widetilde v'_{2,n}) ~dx
\\&
+\sum_{i=0}^{n-2}\lambda_n
R\left(\frac{u_n^{i+2}-u_n^{i+1}}{\lambda_n},
\frac{u_n^{i+1}-u_n^{i}}{\lambda_n}\right)
-C\lambda_n\, ,
\end{align*}
where we have used $J_0^{**}(v'_{i,n})=J_0^{**}(\widetilde v'_{i,n})$,
as ${\widetilde v'}_{i,n}$ only differs from $v'_{i,n}$ on segments with slope of at
least $\gamma$ and $J_0^{**}(z)$ is constant for all $z\geq \gamma$.

Note that the first two integrals on the right-hand side are precisely
$\frac{1}{2}H(v_{1,n})+\frac{1}{2}H(v_{2,n})$. Subtracting this term
from both sides of the equation and dividing by $\lambda_n$, we obtain that
\begin{align}
\nonumber
H_{1,n}^\ell(u_n) & = \frac{H_n^\ell(u_n) - \inf H(u)}{\lambda_n}  \\
\nonumber & \geq
\frac{H_n^\ell(u_n)
-\frac{1}{2}H(v_{1,n})-\frac{1}{2}H(v_{2,n})}{\lambda_n} 
\\
\nonumber
& \geq
\frac{1}{2\lambda_n}\int_0^1
(J_0-J_0^{**})(\widetilde v'_{1,n}) ~dx +\frac{1}{2\lambda_n}\int_0^1
(J_0-J_0^{**})(\widetilde v'_{2,n}) ~dx
\\&
+
\sum_{i=0}^{n-2}
R\left(\frac{u_n^{i+2}-u_n^{i+1}}{\lambda_n},
\frac{u_n^{i+1}-u_n^{i}}{\lambda_n}\right)
-C \label{estfirst}\, .
\end{align}
Notice that the constant $C$ in \eqref{estfirst} does not depend on the sequence $(u_n)_n \subset \mathcal{A}^\ell_n(0,1)$.

We prove \eqref{expl} by contradiction. 
For a given $0<\varepsilon < \frac{\gamma^c - \gamma}{2}$ define 
\begin{equation*}
I_n := \left\{i: \frac{u^{i+1}_n - u^i_n}{\lambda_n} \geq \gamma + \varepsilon\right\}\, ,
\end{equation*}
\begin{equation}\label{def-Jn}
J_n := \left\{i: \frac{u_n^{i+2} - u_n^i}{2\lambda_n} \geq \gamma + \varepsilon/2\right\}\,.
\end{equation}
Suppose by contradiction that there exists $\varepsilon>0$ such that 
\begin{equation}\label{biggereps}
\limsup_{n\rightarrow + \infty} \#I_n  = + \infty\, .
\end{equation}
First we prove that \eqref{biggereps} implies that
\begin{equation}\label{aver}
\limsup_{n\rightarrow + \infty} \# J_n  = + \infty\, .
\end{equation}
Indeed, using \eqref{estfirst}  and the equiboundedness of the rescaled energy,  we obtain 
\begin{equation}\label{aaaa}
\sum_{i=0}^{n-2}R\left(\frac{u_n^{i+2} - u_n^{i+1}}{\lambda_n}, \frac{u_n^{i+1} - u_n^{i}}{\lambda_n}\right) \leq C\, .
\end{equation}
Introduce the notation
\begin{align*}
& \mathfrak{I}_n := \left\{i :\left|\frac{u_n^{i+1} - u_n^{i}}{\lambda_n} - \frac{u_n^{i+2} - u_n^{i}}{2\lambda_n}\right| \geq \frac{\varepsilon}{2}\right\}\ \cap  J_n^c\, ,
\end{align*}
where we denote by $J_n^c$ the set of indices that do not belong to $J_n$.
By the definition of $J_0$ we have that $R(z_1,z_2) \geq 0$ for every $z_1,z_2$. In addition to that, whenever $\frac{1}{2}(z_1 + z_2) \in (0,\gamma^c)$, we also have that $R(z_1,z_2) = 0$ if and only if $z_1 = z_2 = \frac{1}{2}(z_1 + z_2)$ by [H2].
Therefore, as $\varepsilon$ is chosen such that $\varepsilon < \frac{\gamma^c - \gamma}{2}$ and thanks to hypotheses [H1] and [H5], there exists a constant $C(\varepsilon)>0$ not depending on $n$ such that for every $i\in \mathfrak{I}_n$
\begin{displaymath}
R\left(\frac{u_n^{i+2} - u_n^{i+1}}{\lambda_n}, \frac{u_n^{i+1} - u_n^{i}}{\lambda_n}\right) \geq C(\varepsilon)\,.
\end{displaymath}
Hence from \eqref{aaaa} we infer
\begin{equation*}
C \geq \sum_{i=0}^{n-2}R\left(\frac{u_n^{i+2} - u_n^{i+1}}{\lambda_n}, \frac{u_n^{i+1} - u_n^{i}}{\lambda_n}\right) \geq \sum_{i\in \mathfrak{I}_n}R\left(\frac{u_n^{i+2} - u_n^{i+1}}{\lambda_n}, \frac{u_n^{i+1} - u_n^{i}}{\lambda_n}\right) \geq  (\#\mathfrak{I}_n) C(\varepsilon)\,.
\end{equation*}
We deduce that $\# \mathfrak{I}_n < C$, where the constant $C$ is not depending on $n$. 
By the definition of $I_n$ and $J_n$ we have also that
\begin{align*}
 I_n \cap J_n^c& = \left\{i\in I_n \cap J_n^c:\left|\frac{u_n^{i+1} - u_n^{i}}{\lambda_n} - \frac{u_n^{i+2} - u_n^{i}}{2\lambda_n}\right| \geq \frac{\varepsilon}{2}\right\} = I_n \cap \mathfrak{I}_n\,.
\end{align*}
As $\# \mathfrak{I}_n < C$, we conclude that $\#(I_n \cap J_n^c) <C$.
Then \eqref{aver} follows as a consequence of \eqref{biggereps}.


Finally, using the inequality \eqref{estfirst} we obtain
\begin{displaymath}
\sum_{i\in J_n} \left(J_0\left(\frac{u_n^{i+2} - u_n^{i}}{2\lambda_n}\right) - J_0(\gamma)\right) \leq C\, .
\end{displaymath}
By the definition of $J_n$ in \eqref{def-Jn} we have
\begin{displaymath}
\sum_{i\in J_n} \left(J_0\left(\frac{u_n^{i+2} - u_n^{i}}{2\lambda_n}\right) - J_0(\gamma)\right) \geq \sum_{i\in J_n} \left(\min_{z \geq \gamma+\varepsilon/2 }J_0\left(z\right) - J_0(\gamma)\right).
\end{displaymath}
Hence assumption [H4] and \eqref{aver} yield a contradiction.

In an analogous way it is possible to obtain \eqref{eq17-?}.

\end{proof}

\subsection{Proof of part (b) of Theorem~\ref{Compactness1}}\label{b}
\begin{proof}

In order to obtain \eqref{eq17-1} we estimate in the following way: 
\begin{align*}
& \sum_{i\in \mathcal{I}_n} \left(\frac{u_n^{i+1} - u_n^i}{\lambda_n} - \gamma \right)_+^2 \\
&=
\sum_{i\in \mathcal{I}_n} \min\left\{\left(\frac{u_n^{i+1} - u_n^i}{\lambda_n} - \gamma \right)_+^2, (\gamma^c - \gamma)^2\right\}\\
&= \sum_{i\in \mathcal{I}_n} \min\left\{\left(\frac{u_n^{i+1} - u_n^{i-1}}{2\lambda_n} - \gamma + \frac{u_n^{i+1} - u_n^{i}}{2\lambda_n} - \frac{u_n^{i} - u_n^{i-1} }{2\lambda_n} \right)_+^2, (\gamma^c - \gamma)^2\right\}\\
& \leq \sum_{i\in \mathcal{I}_n}  \min\left\{2\left(\frac{u_n^{i+1} - u_n^{i-1}}{2\lambda_n} - \gamma\right)_+^2 + 2\left(\frac{u_n^{i+1} - u_n^{i}}{2\lambda_n} - \frac{u_n^{i} - u_n^{i-1} }{2\lambda_n} \right)_+^2 , (\gamma^c - \gamma)^2\right\}\\
&\leq \sum_{i\in \mathcal{I}_n} \min\left\{2\left(\frac{u_n^{i+1} - u_n^{i-1}}{2\lambda_n} - \gamma\right)_+^2, (\gamma^c - \gamma)^2\right\} \\ &\quad + \sum_{i\in \mathcal{I}_n}\min\left\{2\left(\frac{u_n^{i+1} - u_n^{i}}{2\lambda_n} - \frac{u_n^{i} - u_n^{i-1} }{2\lambda_n} \right)_+^2 , (\gamma^c - \gamma)^2\right\}\\
&\leq 2\sum_{i\in \mathcal{I}_n} \min\left\{\left(\frac{u_n^{i+1} - u_n^{i-1}}{2\lambda_n} - \gamma\right)_+^2, (\gamma^c - \gamma)^2\right\} + \sum_{i\in \mathcal{I}_n}\left|\frac{u_n^{i+1} - u_n^{i}}{\lambda_n} - \frac{u_n^{i} - u_n^{i-1} }{\lambda_n} \right|^2\,.
\end{align*}
Using \eqref{expl} in part (a) of Theorem~\ref{Compactness1}, we deduce that for all $i\in \mathcal{I}_n$ but a finite number of indices (independent of $n$) one has that
\begin{equation*}
\frac{u_n^{i} - u_n^{i-1}}{\lambda_n} < \gamma^c\, ,
\end{equation*} 
where $c$ is the constant defined in assumption [H0].
Therefore, combining this fact with Lemma~\ref{Jonecomp} and inequality \eqref{estfirst} we infer that
\begin{displaymath}
\sum_{i\in \mathcal{I}_n}\left|\frac{u_n^{i+1} - u_n^{i}}{\lambda_n} - \frac{u_n^{i} - u_n^{i-1} }{\lambda_n} \right|^2 < C\,.
\end{displaymath}
Notice that
\begin{displaymath}
\mathfrak{I}_n := \left\{i: \frac{u_n^{i+1} - u_n^{i-1}}{2\lambda_n} > \gamma^c\right\} \subset \left(\left\{i: \frac{u_n^{i+1} - u_n^{i}}{\lambda_n} > \gamma^c\right\} \cup \left\{i: \frac{u_n^{i} - u_n^{i-1}}{\lambda_n} > \gamma^c\right\}\right)\, ;
\end{displaymath}
hence, thanks to \eqref{expl} in Theorem~\ref{Compactness1}, its cardinality is finite and independent of $n$. Therefore
\begin{equation}\label{quasi}
\sum_{i\in \mathcal{I}_n} \left(\frac{u_n^{i+1} - u_n^i}{\lambda_n} - \gamma \right)_+^2 \leq 2\sum_{i\in \mathfrak{I}_n^c} \left(\frac{u_n^{i+1} - u_n^{i-1}}{2\lambda_n} - \gamma\right)_+^2 + C\, .
\end{equation}
Finally, using the strict convexity of $J_0$ in $(0,\gamma^c + c)$ (see hypothesis [H0]), we obtain from \eqref{quasi} that
\begin{equation*}
\sum_{i\in \mathcal{I}_n} \left(\frac{u_n^{i+1} - u_n^i}{\lambda_n} - \gamma \right)_+^2 \leq  C \sum_{i\in \mathfrak{I}_n^c} \left(J_0\left(\frac{u_n^{i+1} - u_n^{i-1}}{2\lambda_n}\right) - J_0^{\ast\ast}\left(\frac{u_n^{i+1} - u_n^{i-1}}{2\lambda_n}\right)\right) + C\\
\leq C\, ,
\end{equation*}
where in the last inequality we used \eqref{estfirst} and the equiboundedness of the rescaled energy.

\end{proof}

\subsection{Proofs of Theorem~\ref{Compactness1}, part (c), and Propositions~\ref{Compactness3} and \ref{LimitationsCondition}}\label{c}

\begin{proof}[\textbf{Proof of part (c) of Theorem~\ref{Compactness1}}]

Assume that \eqref{liminfslopebigger}
was not true.  
Notice first that, as $J_j(z)=+\infty$ for $z\leq 0$ (see [H5]), we may restrict ourselves to sequences of
configurations $(u_n)_n$ of the discrete chains with $u_n\rightarrow u$ with $u'_n>0$.
Set $0<S<\widetilde S$ and define
\begin{displaymath}
h(x) := \int_{\lambda_n}^x \chi_{\{u'_n < S\}} ~dt \quad \mbox{and} \quad w(x) :=\int_{\lambda_n}^x \chi_{\{u'_n > \widetilde S\}}~dt\, .
\end{displaymath}
for $x\in (\lambda_n,1-\lambda_n)$ and extended constantly (and in a continuous way) to $(0,1)$.
Then setting $K>0$, define 
\begin{displaymath}
\widetilde u_n(x) := u_n(x) + Kh(x) - \frac{K h(1)w(x)}{w(1)}\, .
\end{displaymath}
One can easily check that $\widetilde u_n \in \mathcal{A}^\ell_n(0,1)$. Moreover
\begin{displaymath}
\widetilde u_n'(x) = u_n'(x) + K  \chi_{\{u'_n < S\}} - \dfrac{K h(1)\chi_{\{u'_n > \widetilde S\}}}{w(1)}\, .
\end{displaymath}
Notice that $h(1) \rightarrow 0$ as $S\rightarrow 0$ uniformly in $n$. Indeed if $h(1)$ was bounded away from zero as $S\rightarrow 0$ and along a subsequence $n_k \rightarrow 0$, then using a diagonal argument it is easy to see that $\sup_n H_{n}(u_{n}) = +\infty$. 
Moreover for a similar argument it is possible to choose $\widetilde S$ small enough such that $w(1) > C > 0$ uniformly in $n$.

Hence choosing $S$ small enough such that $Kh(1)/w(1) \leq \widetilde S /2$ we ensure that $\widetilde u_n'(x) > 0$ and therefore $\widetilde u_n \in \mathcal{A}^\ell_n(0,1)$.
Moreover
\begin{displaymath}
|\widetilde u_n - u_n| = K\left|h(x) - h(1)\dfrac{w(x)}{w(1)}\right|\leq Kh(1).
\end{displaymath}
This implies, thanks to assumption $[\Phi1]$ that
\begin{gather*}
\left|\sum_{i=0}^n \lambda_n \Phi(i \lambda_n,u_n^i)-\sum_{i=0}^n
\lambda_n \Phi(i \lambda_n,{\widetilde u}_n^i)\right|\leq Ch(1)
\,.
\end{gather*}
Therefore
\begin{align}
H_n(u_n)-H_n(\widetilde u_n) \geq &\sum_{i=0}^{n-1} \left[J_1 \left(\frac{u_n^{i+1} - u_n^i}{\lambda_n} \right)- J_1 \left(\frac{\widetilde u_n^{i+1} - \widetilde u_n^i}{\lambda_n} \right)\right] \nonumber \\
&+ \sum_{i=0}^{n-2} \left[J_2 \left(\frac{u_n^{i+2} - u_n^i}{\lambda_n}\right) - J_2 \left(\frac{\widetilde u_n^{i+2} - \widetilde u_n^i}{\lambda_n} \right)\right] - C h(1)\, . \label{spliit}
\end{align}
Among the contribution of the first neighborhood interaction in the previous estimate, we treat the intervals with $u_n' \geq \widetilde S$ and the intervals with $u_n' < S$ differently (if $ S<u_n' \leq \widetilde S$, then $\widetilde u_n' = u_n'$).
First consider the intervals $(i\lambda_n,(i+1)\lambda_n)$ where $u'_n \geq \widetilde S$. In these intervals we have $|\widetilde u_n' - u_n'| \leq Kh(1)/w(1) \leq \widetilde S / 2$ for $n$ big enough. Hence thanks to hypothesis [H1] we have
\begin{gather*}
J_1\left(\frac{u_n^{i+1}-u_n^{i}}{\lambda_n}\right)
- J_1\left(\frac{{\widetilde u}_n^{i+1}-{\widetilde u}_n^{i}}{\lambda_n}\right) \geq -Ch(1)\,.
\end{gather*}
On the other hand, if $u'_n <  S$ on $(i\lambda_n,(i+1)\lambda_n)$, then $K < \widetilde u_n' < K +S$ on these segments.  
Therefore, thanks to the convexity of $J_1$ in assumption [H0] (we choose $K + S$ small enough to make $J_1$ monotone in $(0,K+S)$), we obtain 
\begin{gather*}
J_1\left(\frac{u_n^{i+1}-u_n^{i}}{\lambda_n}\right)
- J_1\left(\frac{{\widetilde u}_n^{i+1}-{\widetilde u}_n^{i}}{\lambda_n}\right)
\geq
J_1(S)
- J_1(K)\,
\end{gather*}
on these segments. 

Regarding the
next-to-nearest neighbour potentials, we split the sum \eqref{spliit} into the sum of the intervals $(i\lambda_n, (i+1)\lambda_n)$ such that $u_n'\geq \widetilde S$ either on $(i\lambda_n,(i+1)\lambda_n)$ or
$((i+1)\lambda_n,(i+2)\lambda_n)$,
where we have (using again hypothesis [H1])
\begin{gather*}
J_2\left(\frac{u_n^{i+2}-u_n^{i}}{2\lambda_n}\right) - J_2\left(\frac{\widetilde u_n^{i+2}- \widetilde u_n^{i}}{2\lambda_n}\right)
\geq -Ch(1)\, ,
\end{gather*}
and in the intervals such that
$u'_n<S$ either on $(i\lambda_n,(i+1)\lambda_n)$ or $((i+1)\lambda_n,(i+2)\lambda_n)$, where
\begin{gather*}
J_2\left(\frac{u_n^{i+2}-u_n^{i}}{2\lambda_n}\right)
-J_2\left(\frac{{\widetilde u}_n^{i+2}-{\widetilde u}_n^{i}}{2\lambda_n}\right)
\geq -C
\end{gather*}
as $J_2$ is bounded from below. In the remaining cases we have $u_n' = \widetilde u_n'$ in $(i\lambda_n, (i+2) \lambda_n)$.
Hence from inequality \eqref{spliit} we obtain
\begin{equation*}
{H_n(u_n)-H_n(\widetilde u_n)}
\geq 
h(1)\left(J_1(S)- J_1(K) - C\right)  - Ch(1)\, ,
\end{equation*}
where $C$ does neither depend on $n$ nor on $S$.

Notice in addition that
\begin{displaymath}
h(1) = \lambda_n \# \left\{i:\frac{u_n^{i+1}-u_n^i}{\lambda_n}<S\right\}\, .
\end{displaymath}
Hence
\begin{equation}\label{aa}
\limsup_n \frac{H_{n}(u_{n})-H_{n}(\widetilde u_{n})}{\lambda_{n}} \geq 
\Big[J_1(S)-J_1(K) - C \Big] \limsup_n\# \left\{i:\frac{u_{n}^{i+1}-u_{n}^i}{\lambda_{n}}<S\right\}\,.
\end{equation}
As we assume that \eqref{liminfslopebigger} does not hold, we have that
\begin{gather*} 
\liminf_{n\rightarrow +\infty} \min_{i\in \{0,\ldots,n-1\}}
\frac{u_n^{i+1}-u_n^i}{\lambda_n}=0\,.
\end{gather*}
This implies that for every $S>0$ it holds 
\begin{equation}\label{op}
\limsup_n \# \left\{i:\frac{u_n^{i+1}-u_n^i}{\lambda_n}<S\right\}\geq 1\,.
\end{equation}
By extracting a subsequence from $(u_n)_n$ (denoted again by $u_n$), we can suppose that the $\limsup$ in \eqref{op} and 
\begin{eqnarray*}
\limsup_n \frac{H_{n}(u_{n})-H_{n}(\widetilde u_{n})}{\lambda_{n}}
\end{eqnarray*}
are both realized. Then
\begin{eqnarray}
\liminf_n H_{1,n}^\ell(u_n) & \geq & \liminf_n \frac{H_n(u_n)-H_n(\widetilde
u_n)}{\lambda_n}+\liminf_n H_{1,n}^\ell(\widetilde u_n) \nonumber \\
&\geq &\liminf_n \frac{H_n(u_n)-H_n(\widetilde
u_n)}{\lambda_n} -M \nonumber\\
& =& \lim_n \frac{H_n(u_n)-H_n(\widetilde
u_n)}{\lambda_n} -M\, , \label{op2}
\end{eqnarray}
where we used \eqref{estfirst} in the proof of the part a) of Theorem~\ref{Compactness1} to estimate $H_{1,n}^\ell(\widetilde u_n)$ from below by $-M$ (notice that in \eqref{estfirst} all the terms on the right side of the inequality are positive). We remark that $M$ is independent of the sequence $\widetilde u_n$ and more precisely of $S$. 
Finally choosing $S$ small enough in \eqref{aa} and using hypothesis [H5] and the equiboundness of $H_{1,n}^\ell(u_n)$ in combination with \eqref{op} and \eqref{op2}, we reach a contradiction to the equiboundedness of $H_{1,n}^\ell$.

\end{proof}

\begin{proof}[\textbf{Proof of Proposition~\ref{Compactness3}}]


Before we show the convergence properties of $u_n$, we observe that, with the same argument as in the proof of Proposition~\ref{MinimizerExistence}, one obtains that $\|u_n\|_{BV(0,1)} \leq C$ uniformly in $n$. Therefore, there exists a subsequence (again denoted by) $(u_n)_n$ and $u\in BV^\ell([0,1])$ such that $u_n \rightharpoonup u$ weakly* in $BV$. In particular, we have that $u_n \rightarrow u$ strongly in $L^1$.
Notice in addition that, thanks to assumption [H5] and the equiboundedness of the energy, $(u_n^{i+1} - u_n^i)/\lambda_n > 0$ for every $i$ and $n$.

We describe in details the arguments for $n$ even; the modifications for $n$ odd are straightforward. 

Consider
\begin{equation*}
I_n := \left\{i\in \{0,\ldots,n-2\} : \frac{u_n^{i+2} - u_n^i}{2\lambda_n} > \sqrt{n}\right\}
\end{equation*}
and define the following function:
\begin{equation*}
v_n(x) := \left\{
\begin{array}{ll}
u_n(x)  & x\in [i\lambda_n, (i+2)\lambda_n), i\notin I_n\, ,\\
u_n(i\lambda_n) & x\in [i\lambda_n, (i+2)\lambda_n), i\in I_n\, . 
\end{array}
\right.
\end{equation*}
Let us prove that $v_n \rightarrow u$ in $L^1$ as $n\rightarrow +\infty$. Indeed, by construction
\begin{displaymath}
\int_0^1 |u_n - v_n|\, dx = \sum_{i\in I_n} \int_{i\lambda_n}^{(i+2)\lambda_n} |u_n - u_n(i\lambda_n)|\, dx \leq   \sum_{i\in I_n} \lambda_n (u_n((i+2)\lambda_n) - u_n(i\lambda_n)) \leq \lambda_n \ell\, .
\end{displaymath}
Defining 
\begin{equation*}
\mathcal{H}_n(u_n) = \sum_{i=1}^{n-2} \left[J_0\left(\frac{u_n^{i+2} - u_n^{i}}{2\lambda_n}\right) - J^{\star\star}_0\left(\frac{u_n^{i+2} - u_n^{i}}{2\lambda_n}\right)\right]\, ,
\end{equation*}
we have, thanks to \eqref{estfirst}, that $\sup_{n} \mathcal{H}_n(u_n) < C$ and therefore $\sup_n \# I_n < M$ for $M>0$. Hence we can suppose that $S_{v_n}$ converges to a finite set that we denote by $\{x_1,\ldots,x_m\} \subset [0,1]$. As the nature of the following argument is local, we can assume without loss of generality that $S = \{x_0\}$.
Define the following sequence of functions:
\begin{equation*}
w_n(x) = \left\{
\begin{array}{ll}
\displaystyle v_n(0) + \int_0^{x} v_n'(t) \, dt & x \leq x_0 \, ,\\[1em]
\displaystyle v_n(0) + \int_0^{x} v_n'(t) + \sum_{t \in S_{v_{n}}} [v_n(t)] & x>x_0 \, .
\end{array}
\right.
\end{equation*}
First we prove that $w_n \rightarrow u$ almost everywhere in $(0,1)$. Indeed, for every $\varepsilon > 0$ there exists an $n_0$ such that for all $n>n_0$ we have $\mbox{dist}(S_{v_n},x_0) < \varepsilon$. Hence $w_n = v_n$ in $(x_0 - \varepsilon, x_0 + \varepsilon)$. As $\varepsilon$ is arbitrary, it follows that $v_n$ and $w_n$ have the same pointwise limit. 

Notice that $w_n' = v_n'$ by construction; moreover $v_n'(x) = u_n'(x)$ for $x\in (i\lambda_n, (i+2)\lambda_n)$ with $i\notin I_n$ and $v_n'=0$ otherwise.
This allows to deduce that there exist $c_1,c_2> 0$ such that 
\begin{equation}
c_1\int_0^1\left|v_n'\right|^2\, dx - c_2 \leq \mathcal{H}_n(u_n) < C\,. \label{uff}
\end{equation}
Indeed, given $C > 0$, by the construction of $v_n$, we have
\begin{equation*}
\int_0^1\left|v_n'\right|^2 \, dx = \int_{\{v_n'\leq \gamma + C\}} \left|v_n'\right|^2 \, dx + \int_{\{v_n'> \gamma + C\}} \left|v_n'\right|^2\, dx \leq (\gamma + C)^2 + n |\{v_n' > \gamma + C\}|\, .
\end{equation*}
Now, set $I^C_n := \{i: v_n' > \gamma + C \mbox{ in } (i\lambda_n, (i+2)\lambda_n)\}$. Then there exists $\widetilde C>0$ such that (see hypothesis [H4])
\begin{eqnarray*}
\mathcal{H}_n(u_n) &=& \sum_{i=0}^{n-2} J_0\left[\left(\frac{u_n^{i+2} - u_n^i}{2\lambda_n}\right) - J_0^{\star\star}\left(\frac{u_n^{i+2} - u_n^i}{2\lambda_n}\right)\right] \geq \sum_{i\in I_n} \left[J_0\left(\frac{u_n^{i+2} - u_n^i}{2\lambda_n}\right) - J_0\left(\gamma\right)\right]\\
&\geq& \widetilde C \#I^C_n 
= \widetilde Cn |\{v_n' > \gamma + C\}|  \,.
\end{eqnarray*}

Therefore \eqref{uff} holds with the right choice of the constants $c_1$ and $c_2$.

Equation \eqref{uff} implies that $\|w_n'\|_{L^2(0,1)} = \|v_n'\|_{L^2(0,1)} < C$ and, applying Poincar\'e's inequality on the intervals $(0,x_0)$ and $(x_0,1)$, we infer that $w_n$ is uniformly bounded in $W^{1,2}((0,1)\setminus \{x_0\})$. Hence $u\in W^{1,2}((0,1) \setminus \{x_0\})$ and, up to subsequences, $w_n'\rightharpoonup u'$ weakly in $L^2(0,1)$. In addition, the definitions of $v_n, w_n$ yield that
\begin{equation*}
\lim_{n\rightarrow +\infty} [w_n](x_0) = [u](x_0)\,. 
\end{equation*}
Finally, following \cite{BraidesGelli2002}, we define the new potential $\widetilde J_0(z) := J_0(z) - J_0^{\star\star}(z)$ for $z>0$ and the rescaled (and extended) ones:
\begin{equation}
F_n(z) = \left\{
\begin{array}{ll}
\dfrac{\widetilde J_0(z)}{\lambda_n} & 0 < z \leq \sqrt{n}\, ,\\
+\infty & \mbox{otherwise}\, ,
\end{array}
\right.
\end{equation}
and 
\begin{equation}
G_n(z) = \left\{
\begin{array}{ll}
\widetilde J_0\left(\dfrac{z}{\lambda_n}\right)  & z\geq \dfrac{1}{\sqrt{n}} \, ,\\ 
+\infty & \mbox{otherwise}\, .
\end{array}
\right.
\end{equation}
We observe that 
\begin{eqnarray*}
\mathcal{H}_n(u_n) &=& \sum_{i\notin I_n} \widetilde J_0\left(\frac{v_n^{i+2} - v_n^{i}}{2\lambda_n}\right) + \sum_{i\in I_n}\widetilde J_0\left(\frac{[v_n]((i+2)\lambda_n)}{2 \lambda_n}\right)  \\
&=& \int_0^1 F_n(v_n')\, dx + \sum_{t\in S_{v_n}} G_n\left([v_n](t)\right)\, .
\end{eqnarray*}

By classical results of $\Gamma$-convergence, see for example \cite[Proposition~2.2]{BraidesGelli2002}, generalized for an arbitrary number of discontinuities, we obtain 
\begin{displaymath}
C > \liminf_n \mathcal{H}_n(u_n) \geq \int_0^1 F(u')\, dx + \sum_{t\in S_u} G([u](t)) \quad \mbox{ if } u\in SBV^\ell([0,1]) \, ,
\end{displaymath}
where 
\begin{displaymath}
F(z) = \left\{
\begin{array}{ll}
0 & 0 <z \leq \gamma\\
+\infty & \mbox{otherwise}\, 
\end{array}
\right.
\end{displaymath}
and
\begin{displaymath}
G(w) = \left\{
\begin{array}{ll}
J_0(+\infty) - J_0(\gamma) & w \geq \gamma\\
+\infty & w \leq 0\\
0 & \mbox{otherwise}\,.
\end{array}
\right.
\end{displaymath}
Moreover $\liminf_n \mathcal{H}_n(u_n) = +\infty$ if $u\in L^1(0,1) \setminus SBV^\ell([0,1])$.

Hence we deduce that $u\in SBV^\ell([0,1])$, $[u]>0$ and $u' \leq \gamma$ almost everywhere and, thanks to hypothesis [H5], that $\#S_u < + \infty$.

\end{proof}

\begin{proof}[\textbf{Proof of Proposition~\ref{LimitationsCondition}}]

By Proposition~\ref{MinimizerExistence} we know that a minimizer $u$ of $H$ exists. We show that we may construct a minimizer $\widetilde u$ such that $\widetilde u'=\min\{u',\gamma\}$ holds and such that $D^s \widetilde u$ is concentrated on the set of points where $\frac{\partial\Phi}{\partial u}$ changes sign and at the boundary of the interval $(0,1)$.

By assumption, $\frac{\partial\Phi}{\partial u}$ changes sign at at most finitely many points, which we denote by $x_1<x_2<\ldots<x_{M-1}$ for some $M\in \mathbb{N}$. Set $x_0=0$ and $x_M=1$. Then define 
\begin{equation}
\widetilde u(x):=\int_{x_i}^x \min\{u'(y),\gamma\} \,dy+a_i
\end{equation}
for $x\in (x_i,x_{i+1})$, $i=0,\ldots,M-1$, where we set
\begin{align*}
a_i:=u(x_i+)
\end{align*}
if $\Phi(x,w)$ is nondecreasing in $w$ for $x\in (x_i,x_{i+1})$, and
\begin{align*}
a_i:=u(x_i+)+\int_{x_i}^{x_{i+1}} (u'(y)-\gamma)_+ \,dy + |D^s u| (x_i,x_{i+1})
\end{align*}
if $\Phi(x,w)$ is nonincreasing in $w$ for $x\in (x_i,x_{i+1})$. Additionally, we add jumps at the boundary of the interval $(0,1)$ in order to match the boundary conditions. We have $\widetilde u(x)\leq u(x)$ on $(x_i,x_{i+1})$ if $\Phi(x,w)$ is nondecreasing in $w$ and the reverse estimate if $\Phi(x,w)$ is nonincreasing in $w$. Hence
\begin{align*}
H(u)-H(\widetilde u)=\sum_{i=0}^{M-1} \int_{x_i}^{x_{i+1}} \Phi(x,u(x))-\Phi(x,\widetilde u(x)) \,dx \geq 0\, .
\end{align*}
So, $\widetilde u \in BV^\ell([0,1])$ is also a minimizer of $H$.

\end{proof}

\subsection*{Acknowledgements}
 This project was partially supported through grants SCHL 1706/2-1 and SCHL 1706/4-1 of the German Science Foundation. 
 All authors acknowledge the stimulating working conditions at the Erwin Schr\"odinger International Institute for Mathematics and Physics (ESI), where   part of this research was developed during the workshop ``New trends in the variational modeling of failure phenomena''. AS would like to thank the Isaac Newton Institute for Mathematical Sciences for support and hospitality during the programme
``The Mathematical Design of New Materials''
when work on this paper was undertaken. This programme was supported by EPSRC grant number EP/R014604/1.

\bibliographystyle{abbrv}
\bibliography{boundarylayer}

\begin{thebibliography}{10}

\bibitem{AmbrosioFuscoPallara}
L.~Ambrosio, N.~Fusco, and D.~Pallara.
\newblock {\em Functions of bounded variation and free discontinuity problems}.
\newblock Oxford Mathematical Monographs. The Clarendon Press, Oxford
  University Press, New York, 2000.

\bibitem{AnzellottiBaldo}
G.~Anzellotti and S.~Baldo.
\newblock Asymptotic development by {$\Gamma$}-convergence.
\newblock {\em Appl. Math. Optim.}, 27:105--123, 1993.

\bibitem{Braides}
A.~Braides.
\newblock {\em {Gamma-convergence for beginners}}.
\newblock Oxford University Press, 2002.

\bibitem{BraidesCicalese}
A.~Braides and M.~Cicalese.
\newblock {Surface energies in nonconvex discrete systems}.
\newblock {\em Math. Models Meth. Appl. Sci.}, 17:985--1037, 2007.

\bibitem{BraidesDalmasoGarroni}
A.~Braides, G.~Dal~Maso, and A.~Garroni.
\newblock Variational formulation of softening phenomena in {F}racture
  {M}echanics: the one-dimensional case.
\newblock {\em Arch. Rational Mech. Anal.}, 146:23--58, 1999.

\bibitem{BraidesGelli2002}
A.~Braides and M.~S. Gelli.
\newblock Continuum limits of discrete systems without convexity hypotheses.
\newblock {\em Math. Mech. Solids}, 7(1):41--66, 2002.

\bibitem{BraidesGelli}
A.~Braides and M.~S. Gelli.
\newblock The passage from discrete to continuous variational problems: a
  nonlinear homogenization process.
\newblock {\em Nonlinear Homogenization and its Applications to Composites,
  Polycrystals and Smart Materials}, pages 45--63, 2004.

\bibitem{BraidesGelli2017}
A.~Braides and M.~S. Gelli.
\newblock Analytical treatment for the asymptotic analysis of microscopic
  impenetrability constraints for atomistic systems.
\newblock {\em ESAIM Math. Model. Numer. Anal.}, 51(5):1903--1929, 2017.

\bibitem{BraidesLewOrtiz}
A.~Braides, A.~J. Lew, and M.~Ortiz.
\newblock Effective cohesive behavior of layers of interatomic planes.
\newblock {\em Arch. Rational Mech. Anal.}, 180(2):151--182, 2006.

\bibitem{BraidesSolci2014}
A.~Braides and M.~Solci.
\newblock Asymptotic analysis of {L}ennard-{J}ones systems beyond the
  nearest-neighbour setting: a one-dimensional prototypical case.
\newblock {\em Math. Mech. Solids}, 21(8):915--930, 2014.

\bibitem{BraidesTruskinovsky}
A.~Braides and L.~Truskinovsky.
\newblock Asymptotic expansions by {$\Gamma$-}convergence.
\newblock {\em Continuum Mech. Thermodyn.}, 20:21--62, 2008.

\bibitem{ReviewCAW}
C.~S. Casari, M.~Tommasini, R.~R. Tykwinski, and A.~Milani.
\newblock Carbon-atom wires: 1-{D} systems with tunable properties.
\newblock {\em Nanoscale}, 8:4414--4435, 2016.

\bibitem{CharlotteTrusk2002}
M.~Charlotte and L.~Truskinovsky.
\newblock Linear elastic chain with a hyper-pre-stress.
\newblock {\em J. Mech. Phys. Solids}, 50(2):217--251, 2002.

\bibitem{CharlotteTrusk2008}
M.~Charlotte and L.~Truskinovsky.
\newblock Towards multi-scale continuum elasticity theory.
\newblock {\em Contin. Mech. Thermodyn.}, 20(3):133--161, 2008.

\bibitem{Dacorogna}
B.~Dacorogna.
\newblock {\em Direct Methods in the Calculus of Variations}.
\newblock Springer Science+Business Media, LLC, second edition, 2008.

\bibitem{Friedrich2017}
M.~Friedrich.
\newblock A {D}erivation of linearized {G}riffith {E}nergies from {N}onlinear
  {M}odels.
\newblock {\em Arch. Rational Mech. Anal.}, 225:425--467, 2017.

\bibitem{FriedrichSchmidt2014}
M.~Friedrich and B.~Schmidt.
\newblock An atomistic-to-continuum analysis of crystal cleavage in a
  two-dimensional model problem.
\newblock {\em J. Nonlinear Sci.}, 24(1):145--183, 2014.

\bibitem{FriedrichSchmidt2015}
M.~Friedrich and B.~Schmidt.
\newblock An analysis of crystal cleavage in the passage from atomistic models
  to continuum theory.
\newblock {\em Arch. Rational Mech. Anal.}, 217(1):263--308, 2015.

\bibitem{Girifalco}
L.~Girifalco.
\newblock {Molecular properties of C$_{60}$ in the gas and solid phases}.
\newblock {\em J. Phys. Chem.}, 96:858--861, 1992.

\bibitem{Hudson}
T.~Hudson.
\newblock Gamma-expansion for a 1{D} confined {L}ennard-{J}ones model with
  point defect.
\newblock {\em Netw. Heterog. Media}, 8:501--527, 2013.

\bibitem{IosifescuLichtMichaille}
O.~Iosifescu, C.~Licht, and G.~Michaille.
\newblock Variational limit of a one dimensional discrete and statistically
  homogeneous system of material points.
\newblock {\em Asymptot.\ Anal.}, 28:309--329, 2001.

\bibitem{JansenKoenigSchmidtTheil}
S.~Jansen, W.~K{\"o}nig, B.~Schmidt, and F.~Theil.
\newblock Surface energy and boundary layers for a chain of atoms at low
  temperature.
\newblock arXiv:1904.06169.

\bibitem{KitavtsevLuckhausRueland}
G.~Kitavtsev, S.~Luckhaus, and A.~R{\"u}land.
\newblock {Surface energies emerging in a microscopic, two-dimensional two-well
  problem}.
\newblock {\em Proc. Royal Soc. Edinburgh A}, 147(5):1041--1089, 2017.

\bibitem{LauerbachSchaffnerSchlomerkemper}
L.~Lauerbach, M.~Sch\"affner, and A.~Schl\"omerkemper.
\newblock On continuum limits of heterogeneous discrete systems modelling
  cracks in composite materials.
\newblock {\em GAMM-Mitt.}, 40(3):184--206, 2018.

\bibitem{Sylicon}
C.~M. Lieber.
\newblock One-dimensional nanostructures: Chemistry, physics and applications.
\newblock {\em Solid State Comm.}, 107(11):607 -- 616, 1998.

\bibitem{Nairetal}
A.~Nair, S.~Cranford, and M.~Buehler.
\newblock The minimal nanowire: Mechanical properties of carbyne.
\newblock {\em EPL}, 95:16002, 2011, Erratum: \textit{EPL}, 106:39901, 2014.

\bibitem{ScardiaSchloemerkemperZanini}
L.~Scardia, A.~Schl\"omerkemper, and C.~Zanini.
\newblock {Boundary layer energies for nonconvex discrete systems}.
\newblock {\em Math. Models Meth. Appl. Sci.}, 21(4):777--817, 2011.

\bibitem{SSZ12}
L.~Scardia, A.~Schl\"omerkemper, and C.~Zanini.
\newblock Towards uniformly {$\Gamma$}-equivalent theories for nonconvex
  discrete systems.
\newblock {\em Discrete Contin. Dyn. Syst. Ser. B}, 17(2):661--686, 2012.

\bibitem{SchaeffnerSchloemerkemper}
M.~Sch\"affner and A.~Schl\"omerkemper.
\newblock On {L}ennard-{J}ones systems with finite range interactions and their
  asymptotic analysis.
\newblock {\em Netw. Heterog. Media}, 13(1):95--118, 2018.

\bibitem{Wagneretal}
T.~Wagner, J.~Aulbach, J.~Sch{\"a}fer, and R.~Claessen.
\newblock Au-induced atomic wires on stepped ge(hhk) surfaces.
\newblock {\em Phys. Rev. Materials}, 2:123402, 2018.

\bibitem{Warneretal}
J.~H. Warner, Y.~Ito, M.~Zaka, L.~Ge, T.~Akachi, H.~Okimoto, K.~Porfyrakis,
  A.~A.~R. Watt, H.~Shinohara, and G.~A.~D. Briggs.
\newblock Rotating fullerne chains in carbon nanopeapods.
\newblock {\em Nano Letters}, 8:2328--2335, 2008.

\bibitem{Zannoni2001}
C.~Zannoni.
\newblock Molecular design and computer simulations of novel mesophases.
\newblock {\em J. Mater. Chem.}, 11:2637--2646, 2001.

\bibitem{Zhangetal}
G.~Zhang, X.~Fang, Y.~Yao, C.~Wang, Z.~Ding, and K.~Ho.
\newblock Electronic structure and transport of a carbon chain between graphene
  nanoribbon leads.
\newblock {\em J. Phys.: Condens. Matter}, 23:025302, 2011.

\end{thebibliography}

\end{document}